\documentclass[12pt]{amsart}
\usepackage{amssymb}
\usepackage{amscd}
\usepackage{color}
\usepackage{url}
\usepackage{hyperref}
\usepackage{comment}
\usepackage{booktabs}
\usepackage{mwe}


\addtolength{\textheight}{.4in}
\addtolength{\topmargin}{-.2in}
\addtolength{\textwidth}{1.6in}
\addtolength{\oddsidemargin}{-.7in}
\addtolength{\evensidemargin}{-.7in}
\parindent=12pt

\numberwithin{equation}{section}

\theoremstyle{plain}
\newtheorem{theorem}[subsection]{Theorem}
\newtheorem{proposition}[subsection]{Proposition}
\newtheorem{lemma}[subsection]{Lemma}
\newtheorem{corollary}[subsection]{Corollary}

\theoremstyle{definition}

\newtheorem{definition}[subsection]{Definition}

\theoremstyle{remark}

\newcommand{\pe}[1]{\langle#1\rangle}
\newcommand{\norm}[1]{\left\|#1\right\|}

\newcommand{\C}{\mathbb{C}}

\newcommand{\N}{\mathbb{N}}
\newcommand{\D}{\mathbb{D}}
\newcommand{\T}{\mathbb{T}}

\newcommand{\X}{\mathcal{X}}

\newcommand{\R}{\mathbb{R}}

\newcommand{\Fi}{\varphi}

\DeclareMathOperator{\Log}{Log}
\DeclareMathOperator{\PR}{Re}

\newcommand{\HH}{\mathcal{H}}

\newcommand{\Span}{\textnormal{span}}

  {\par \hfill \fbox{}}

\makeatletter
\def\author@andify{%
	\nxandlist {\unskip ,\penalty-1 \space\ignorespaces}%
	{\unskip {} \@@and~}%
	{\unskip \penalty-2 \space \@@and~}%
}
\makeatother

\title[Dunford property for composition operators on $H^p$-spaces]{Dunford property for composition \\ operators on $H^p$-spaces}

\author{E. A. Gallardo-Guti\'errez}
\address{Departamento de An\'alisis Matem\'atico y Matem\'atica Aplicada, Facultad de Matem\'aticas, Universidad Complutense de Madrid, Plaza de Ciencias 3, 28040 Madrid, Spain and Instituto de Ciencias Matem\'aticas ICMAT (CSIC-UAM-UC3M-UCM), Madrid, Spain.}
\email{eva.gallardo@mat.ucm.es}

\author{F. J. Gonz\'alez-Do\~na}
\address{Departamento de Matem\'aticas, Escuela Polit\'ecnica Superior, Universidad Carlos III de Madrid, Avda. de la Universidad 30, 28911 Legan\'es (Madrid), Spain.}
\email{fragonza@math.uc3m.es}

\author{M. Monsalve-L\'opez}
\address{Departamento de An\'alisis Matem\'atico y Matem\'atica Aplicada, Facultad de Matem\'aticas, Universidad Complutense de Madrid, Plaza de Ciencias 3, 28040 Madrid, Spain.}
\email{migmonsa@ucm.es}

\thanks{Authors are partially supported by Plan Nacional  I+D grants no. PID2019-105979GB-I00 and PID2022-137294NB-I00(Spain), the Spanish Ministry of Science and Innovation, through the ``Severo Ochoa Programme for Centres of Excellence in R\&D'' (CEX2019-000904-S) and from the Spanish National Research Council, through the ``Ayuda extraordinaria a Centros de Excelencia Severo Ochoa'' (20205CEX001).}

\date{July 18th, 2023, Revised: April 1st, 2024}
\subjclass[2020]{47A15, 47A11, 47B37, 47B38}
\keywords{Dunford property $(C)$, decomposable operators, local spectral theory, composition operators}

\begin{document}

\begin{abstract}
The Dunford property $(C)$ for composition operators on $H^p$-spaces ($1<p<\infty$), as well as for their adjoints, is completely characterized within the class of those induced by linear fractional transformations of the unit disc. As a consequence, it is shown that the Dunford property is stable in such a class addressing a particular instance of a question posed by Laursen and Neumann.
\end{abstract}

\maketitle

\section{Introduction and Preliminaries}

In 1959, E. Bishop used a Banach space version of the analytic dua\-lity principle established by  Silva, K\"{o}the, Grothendieck and others operator theorists to study connections between spectral decomposition properties of a Banach space operator and its adjoint. Of particular interest in this setting are operators satisfying the \emph{Bishop property} $(\beta)$ since E. Albrecht and J. Eschmeier \cite{AS} developed a complete duality theory for them. Actually, the property $(\beta)$ turns out to characterize restrictions of decomposable operators to closed invariant subspaces, and Albrecht and Eschmeier's analytic functional model shows that every Banach space operator is similar to the quotient of an operator with the Bishop property. Similarly, quotients of decomposable operators are determined by the decomposition property $(\delta)$ and in fact, both properties $(\beta)$ and $(\delta)$ are completely dual: an operator has one if and only if its adjoint has the other.

\smallskip

The class of decomposable operators, introduced by Foia\c{s} \cite{Foias} in the sixties, constitutes  a generalization of spectral operators (in the sense of Dunford) and many operators in Hilbert spaces as unitary operators, self-adjoint operators or more generally, normal operators are decomposable. Recall that a linear bounded operator $T$ acting on a Banach space $X$ is \emph{decomposable} if for every open cover $\{U_1, U_2\}$ of the complex plane $\mathbb{C}$, there exist closed  $T$-invariant  subspaces $X_1$ and $X_2$ of $X$ so that $X= X_1 + X_2$ and the spectrum of the restriction $T|_{X_i}$ is contained in $U_i$, for $i= 1,2$.
It turns out that $T$ is decomposable if and only if $T$ satisfies both properties $(\beta)$ and $(\delta)$.

\smallskip
Although the class of operators which are decomposable is not stable by restricting them to closed invariant subspaces, it turns out that the class of those which satisfy the \emph{Dunford property} $(C)$ is. This property,  pioneered by Dunford in the sixties, has played a major role in the development of the theory of spectral operators. Being the Dunford property $(C)$ a weaker property than the Bishop one (see the Millers' example \cite{Miller-Miller 98}), one has that an operator  $T$ is decomposable if and only if it satisfies both properties $(C)$ and $(\delta)$. We refer to the monograph by Laursen and Neumann \cite{LN} for more on this subject.

\smallskip

Studying local spectral properties for classes of \emph{concrete operators} acting on function spaces leads naturally to interesting questions from the function theoretic perspective, and this is one of the driving aim of the present work. At this regard, it is worthy to point out the works of the Millers' and coauthors \cite{BMM} or \cite{MMS} or the one by Aleman and Persson \cite{AP} regarding the classical Ces\`aro operator or Ces\`aro type operators.

\smallskip

One of the most significant classes of operators having the property $(C)$ is the one consisting of \emph{multipliers} acting on  \emph{functional Banach spaces} (see \cite{CMc} for the definition and \cite[Proposition 1.6.9]{LN} for the result). In particular, it turns out that every multiplier of the Hardy spaces $H^p(\D)$, $1\leq p<\infty$ has the property $(C)$ but fails property $(\delta)$. In this work, we characterize the Dunford property $(C)$ for composition operators $C_{\varphi}$, and their adjoints $C_{\varphi}^*$, whenever they are induced by linear fractional transformations $\varphi$ of the unit disc $\mathbb{D}$ in $H^p$-spaces ($1<p<\infty$). They constitute a first approach to a more comprehensive  analysis of the study of composition operators induced by any holomorphic self map of $\D$. At this regard, it is worthy to remark that while the spectrum of composition operators induced by linear fractional transformations of  $\mathbb{D}$ in $H^p$-spaces is known (see \cite[Chapter 8]{CMc} and the references therein), determining the spectrum of general composition operators in $H^p(\D)$ remains as an open (and difficult) question (see \cite{GM} for recent results in this setting).

\smallskip

As we will show, the Dunford property $(C)$ is strongly dependent on the fixed-point configuration of the inducing symbol $\varphi$, and fails drastically in those cases when the operator does not have it, that is,  the operator does not have even the \emph{single-valued extension property}. Moreover, when both $C_{\varphi}$ and $C_{\varphi}^*$ have the  Dunford property $(C)$, it turns out that they are decomposable operators. In Table  \ref{tabla} we summarise the characterization, and before proceeding further with comments, we recall some preliminaries.

\smallskip

\subsection*{The setting} For $1 \leq p < \infty$, the \emph{Hardy space} $H^p(\mathbb{D})$ is defined as the complex Banach space of analytic functions $f : \mathbb{D}\to \mathbb{C}$ with finite norm given by
	\[
	\|f\|_{H^p(\mathbb{D})} := \lim_{r\to 1^-} \bigg(\frac{1}{2\pi} \int_0^{2\pi} |f(re^{i\theta})|^p \, d\theta \bigg)^{1/p}.
	\]
	For $p=\infty$, the space $H^\infty(\mathbb{D})$ consists of all bounded analytic function on $\mathbb{D}$ equipped with the supremum norm.
	
	\medskip
	
	Fixed $1 < p < \infty$, it is a well-known fact that each element of the dual space $H^p(\mathbb{D})^*$ may be regarded univocally as a continuous functional of the form
	\[
	\varphi_h(f) := \frac{1}{2\pi} \int_0^{2\pi} f(e^{i\theta})\,h(e^{i\theta})\, d\theta,
	\]
	for some function $h \in L^{p'}(\mathbb{T})/H_0^{p'}(\mathbb{D})$, where $\mathbb{T}$ denotes the unit circle,  $\frac{1}{p} + \frac{1}{p'} = 1$ and
	\[
	H_0^{p'}(\mathbb{D}):=\bigg\{g \in H^{p'}(\mathbb{D}) : \int_0^{2\pi} g(e^{i\theta}) \, d\theta = 0\bigg\}=zH^{p'}(\mathbb{D}).
	\]
	Equivalently, any functional $\varphi : H^p(\mathbb{D}) \to \mathbb{C}$ in the dual space $H^p(\mathbb{D})^*$ can be identified with a function $g \in H^{p'}(\mathbb{D})$ such that
	\[
	\varphi(f) := \frac{1}{2\pi} \int_0^{2\pi} f(e^{i\theta})\,\overline{g(e^{i\theta})}\, d\theta.
	\]
	In fact, this correspondence establishes a sesquilinear dual pairing between $H^p(\mathbb{D})^*$ and $H^{p'}(\mathbb{D})$. In particular, the Hardy space $H^p(\mathbb{D})$ is reflexive for every $1 < p <\infty$.
	
	
\subsection*{Composition operators.} If $\varphi : \mathbb{D}\to\mathbb{D}$ is a holomorphic map, the \emph{composition operator} $C_\varphi$ is defined as
	\begin{align*}
		C_\varphi : H^p(\mathbb{D}) & \to H^p(\mathbb{D}) \\
		f & \mapsto f\circ\varphi.
	\end{align*}
	The boundedness of each composition operator $C_\varphi : H^p(\mathbb{D})  \to H^p(\mathbb{D})$ is guaranteed by the \emph{Littlewood Subordination Theorem}. Clearly, $C_\varphi$ is invertible in $H^p(\mathbb{D})$ if and only if the function $\varphi$ is an \emph{automorphism} of the unit disk $\mathbb{D}$.
	
	\smallskip
	
In this work, we consider composition operators induced by \textit{linear fractional transformations}  of $\D$, namely, holomorphic maps of $\D$ given by
	$$ \Fi(z) = \frac{az+b}{cz+d}, \quad ad-bc \neq 0$$
where
$$ |b\overline{d}-a\overline{c}|+|ad-bc| \leq |b|^2-|d|^2 $$
(see, for instance, \cite{CDMV}). The linear fractional transformations can be classified according to their fixed points in the Riemann sphere $\widehat{\C}:= \C\cup \{ \infty\}$.
A linear fractional map $\Fi$ that takes $\D$ into itself is called \textit{parabolic} if it has just one fixed point, which must lie in $\T$. We will distinguish the cases of \textit{parabolic automorphisms} (PA) and \textit{parabolic non-automorphisms} (PNA).
	
	\medskip
	
	The other possible situation is when $\Fi$ has two fixed points, and the classification depends on the location of such fixed points. One of them must lie in $\overline{\D}.$ If it lies on $\T$, then $C_\Fi$ is called \textit{hyperbolic}. If the other fixed point lies in $\T$, then $\Fi$ is an automorphism and will be called \textit{hyperbolic automorphism} (HA). If it lies in $\widehat{\C}\setminus \overline{\D}$, $\Fi$ is a \textit{hyperbolic non-automorphism of first kind} (HNA I), and if it lies in $\D$ it will be called a \textit{hyperbolic non-automorphism of second kind} (HNA II).
	
	\medskip
	
	Finally, if $\Fi$ has no fixed points in $\T$, one of them must lie in $\D$, and the other fixed point must lie in $\widehat{\C}\setminus \overline{\D}.$ Here, we distinguish two situations: if $\Fi$ is an automorphism it is called \textit{elliptic} (EA), and otherwise, $\Fi$ is said to be \textit{loxodromic} (LOX).
	
	\medskip
	
	After conjugations with appropriate linear fractional transformations, every of these maps can be expressed in a standard form as described in Table  \ref{tabla}. This classification is well-known in the literature and has been used extensively (see, for instance, \cite{GR, LLPR}).

\subsection*{Local spectral theory} Local spectral theory arose as an effort to extend some of the most important properties of normal operators to broader classes of operators in the context of Banach spaces. In this regard, the notion of \emph{decomposable operators} emerges:  bounded linear operators $T$ on a Banach space $X$ for which each splitting of the spectrum $\sigma(T)$ corresponds to a sum decomposition of the space $X$ consisting of $T$-invariant subspaces. The class of decomposable operators is considerably large: for instance, all operators admitting a sufficiently rich functional calculus are decomposable, as well as any operator with totally disconnected spectrum.
Among the local spectral properties, of particular relevance is the \textit{single-valued extension property}.
	
	\begin{definition}
A linear bounded operator $T$ in $X$ has the single-valued extension property (abbreviated as SVEP) if for every open subset $U\subset \C$ and every analytic function $f: U \rightarrow X$ satisfying
		\[
		(T-zI)f(z) = 0, \qquad \text{for all }z \in U	
		\]
		it follows that $f \equiv 0.$
	\end{definition}
	
	It is clear that every operator  whose point spectrum has empty interior automatically possesses the SVEP. This property is closely related to the concept of \textit{local resolvent} of an operator. Given a linear bounded operator $T$ in $X$  and a vector $x\in X$, the local resolvent of $T$ at $x$ (denoted by $\rho_T(x)$) is the union of all the open sets $U\subset \C$ for which there exists an analytic function $f_x : U \rightarrow X$ satisfying
	\begin{equation}\label{ecuacion resolvente local}
		(T-zI)f_x(z) = x, \qquad z \in U.
	\end{equation}
	
For those operators $ T$ enjoying the SVEP, the functional equation \eqref{ecuacion resolvente local} has a unique solution for each $x \in X$. Hence, in that case, there exists a unique local resolvent function $f_x : \rho_T(x) \to X$ satisfying \eqref{ecuacion resolvente local}.
	
	\smallskip
	
	The \textit{local spectrum} of $T$ at a vector $x\in X$ is defined as $\sigma_T(x) := \C \setminus \rho_T(x)$. With this notion, one may define the \textit{local spectral subspaces} (or simply the \textit{spectral subspaces}) of $T $ associated to a subset $F\subset \C$ as
	\[
	X_T(F) := \{ x\in X \,  :  \, \sigma_T(x) \subseteq F \}.	
	\]
	For each $F\subset \C$ the spectral subspace $X_T(F)$ is a (non-necessarily closed) linear manifold in $X$ which is hyperinvariant by the operator $T$.
With the definition of (local) spectral subspaces at hands, we recall the \textit{Dunford property} $(C)$:
	\begin{definition}
A linear bounded operator $T$ in $X$ has the Dunford's property $(C)$ if the local spectral subspace $X_T(F)$ is closed for every closed set $F\subset \C.$
	\end{definition}
	It turns out that every decomposable operator has property $(C)$, and every operator with property $(C)$ enjoys the SVEP (see \cite[Theorem 1.2.7 and Proposition 1.2.19]{LN}).
	
	\smallskip
	
	As it was pointed out in the introduction, in this work we provide a complete characterization of the Dunford's property $(C)$ for composition operators $C_\Fi$ induced by linear fractional maps in $H^p(\D)$, $(1<p<\infty)$,  as well as  for their adjoints $C_\Fi^*.$ Next table indicates, in terms of the configuration of the fixed points of the symbol $\Fi$, whether the operators $C_\Fi$ or $C_\Fi^*$ have the property $(C)$.
	
	\medskip

	\begin{table}[h!]
		\centering
		\begin{tabular}{cclc}
			\toprule
			Symbol & Fixed points & Canonical form of $\Fi$ & Property $(C)$\\ \midrule
			HA & $1,\, -1$ & $\Fi(z) = \frac{r+z}{1+rz}, \quad 0<r<1$ & $C_\Fi^*$ (Thm. \ref{propiedad (C) automorfismo hiperbolico})   \\[0.5em]
			EA & $0,\, \infty$ & $\Fi(z) = \omega z, \quad \omega \in \T$ & $C_\Fi$ and $C_\Fi^*$ (\cite{Smith})  \\[0.5em]
			PA & $1$ & $\Fi(z) = \frac{(2-a)z+a}{-az+2+a}, \quad \PR(a)= 0$ &  $C_\Fi$ and $C_\Fi^*$ (\cite{Smith}) \\[0.5em]
			HNA I & $1,\, \infty$ & $\Fi(z) =rz+1-r, \quad 0<r<1$ & $C_\Fi^*$ (Thm. \ref{teorema hiperbolico no-automorfismo}) \\[0.5em]
			HNA II & $0,\, 1$ & $\Fi(z) = \frac{rz}{1-(1-r)z}, \quad 0<r<1$ & $C_\Fi$ (Thm. \ref{teorema hiperbolico no-automorfismo ii})  \\[0.5em]
			PNA & $1$ & $\Fi(z) = \frac{(2-a)z+a}{-az+2+a},\quad \PR(a)> 0$ &  $C_\Fi$ and $C_\Fi^*$ (\cite{Shapiro}) \\[0.5em]
			LOX & $c\in \D,\, \infty$ & $\Fi(z) = a(z-c)+c, \quad |a|+|1-a||c|\leq 1 $ &  $C_\Fi$ and $C_\Fi^*$ (\cite{Kamowitz})  \\[0.5em]\bottomrule
		\end{tabular}
\medskip
\medskip
\caption{Dunford property $(C)$ for composition operators and their adjoints acting on $H^p(\D)$ for $1<p<\infty$}\label{tabla}
	\end{table}

	It is worth mentioning that the characterization of the Dunford's property $(C)$ is independent of $1 < p < \infty$ and relies upon exclusively on the nature of $\varphi$. Likewise, in the cases in which  $\Fi$ is an elliptic automorphism, a parabolic self-map of $\D$  or a loxodromic one,  the induced composition operators are either known to be decomposable in $H^p(\D)$ or the result is straightforward: if $\Fi$ is an elliptic or parabolic automorphism, $C_\Fi$ is generalized scalar \cite[Theorems 1.1 and 1.2]{Smith}, and therefore decomposable. If $\Fi$ is a parabolic non-automorphism, Shapiro showed that $C_\Fi$ is decomposable in \cite{Shapiro}. Finally, if $\Fi$ is loxodromic, the spectrum of $C_\Fi$ is totally disconnected \cite{Kamowitz}, so $C_\Fi$ is decomposable as well. As a consequence, the adjoints of such operators are also decomposable \cite[Theorem 2.5.3]{LN}, and both $C_\Fi$ and $C_{\Fi}^*$ acting on $H^p(\D)$ and $H^p(\D)^*$ respectively for $1<p<\infty$ have the Dunford property $(C)$.

\smallskip	

In order to deal with the remaining cases, in Section  \ref{seccion 2}	we prove a sufficient condition for linear bounded operators acting on Banach spaces ensuring the Dunford  property $(C)$. In Section \ref{seccion 3}, we consider invertible composition operators and show that if  $\Fi$ is a hyperbolic automorphism of $\D$,
$C_\Fi$  does not have the SVEP for $1\leq p < \infty.$ In other words, all the invertible composition operators in $H^p(\mathbb{D})$ are decomposable except the hyperbolic ones. Nevertheless, in this case, we prove that $C_\Fi^*$ has the Dunford property $(C)$ for every $1<p<\infty.$  As a consequence, we are able to describe all the local spectral subspaces and characterize the local spectra. In Section \ref{seccion 4}, we consider the non-invertible composition operators,  dealing with the cases of hyperbolic non-automorphisms of first and second kind. In particular, as a consequence, we describe the local spectra of such operators. Finally, as a byproduct of the results summarized in Table \ref{tabla}, we show that the class of composition operators induced by linear fractional transformations is stable under the Dunford property $(C)$.

\section{A sufficient condition}\label{seccion 2}
	In this section, we present a general result for linear bounded operators acting on  Banach spaces which provides a sufficient condition in order to have the Dunford property $(C)$, along with some more (local) spectral features. This result, which can be understood as an extension of \cite[Proposition 1.6.12]{LN}, will be applied in the context of composition operators.
	
	\smallskip
	
	We start by recalling the \textit{glocal spectral subspaces} associated to an operator $T$ in order to deal with a variant of (local) spectral subspaces which are better suited for operators which do not have the SVEP. Given a closed set $F \subseteq \C$, the  glocal spectral subspace $\mathcal{X}_T(F)$ consists of all $x \in X$ for which there exists an analytic function $f : \C\setminus F\rightarrow X$ such that
	\[
	(T-zI)f(z) = x, \qquad z \in \C\setminus F.
	\]
	It is clear that the equality $\X_T(F) = X_T(F)$ holds for every closed set $F\subset \C$ whenever the operator $T$ enjoys the SVEP. But in general, we have the inclusion $\X_T(F)\subseteq X_T(F)$.
	
	\medskip
	
Since we are dealing with Hardy spaces $H^p(\D)$, for every operator $T$ and closed set $F\subset \C$, we will denote by $H^p_T(F)$ and $\HH^p_T(F)$ the associated local spectral subspaces and glocal spectral subspaces, respectively.
	
	 \medskip
	
	Glocal spectral subspaces behave well with respect to the adjoint operation. Indeed, by \cite[Proposition 2.5.1]{LN}, if $T : X  \to X$ is a linear bounded operator and $F$ and $G$ are two disjoint closed subsets of $\C$, then
	\begin{equation}\label{glocal containments}
		\X_T(F) \subseteq {^\perp\X^*_{T^*}(G)} \qquad\text{and}\qquad \X^*_{T^*}(F) \subseteq \X_T(G)^\perp,
	\end{equation}
	where $M^\perp$ denotes the annihilator of a linear manifold $M \subseteq X$, while ${^\perp N}$ denotes the preannihilator of a linear manifold $N\subseteq X^*$. Actually, since $H^p(\D)$ is reflexive for $1<p<\infty$, having in mind the dual pairing between $H^p(\D)$ and $H^{p'}(\D)$, ($\frac{1}{p} + \frac{1}{p'}=1$), the property \eqref{glocal containments} reads as
	\[
	\mathcal{H}^p_T(F)\subseteq \mathcal{H}^{p'}_{T^*}(G^*)^\perp,	
	\]
	where $A^* := \{\overline{z}  :  z \in A\}$ for every subset $A\subseteq \C$.
	
	\smallskip
	
	Before stating the main result of the section, let us recall that for each $x \in X$, the \textit{local spectral radius} of $T$ at $x$ is the quantity
$$r_T(x) := \limsup\limits_{n\rightarrow\infty} \norm{T^nx}^{1/n}.$$
If $T$ has the SVEP, then $r_T(x) = \max \{|\lambda| : \lambda \in \sigma_T(x)\}$ for every non-zero $x\in X.$ Besides, we denote by $r(T)$ the spectral radius of $T$ and by $\eta(\sigma(T))$ the full spectrum of $T$, namely, the union of $\sigma(T)$ and every bounded connected component of $\rho(T)$.
	
	\begin{theorem}\label{teorema propiedad C}
		Let $T :X \to X$ be a bounded linear operator in a complex Banach space $X$. Assume further that for each non-empty relatively open subset $U \subseteq \sigma(T)$, the glocal spectral subspace $\X_T(\overline{U})$ is dense in $X$. Then, $\X^*_{T^*}(F) = \{0\}$ for every closed subset $F\subsetneq \sigma(T^*)$. As a consequence, the following properties hold:
		\begin{enumerate}
			\item [(i)] If $\sigma(T^*)$ is not a singleton, then $\sigma_\text{\normalfont{p}}(T^*) = \emptyset$.
			\item [(ii)] $T^*$ has the Dunford's property $(C)$.
			\item [(iii)] $\sigma_{T^*}(x) = \sigma(T^*)$ for every $x \in X^*\setminus \{0\}.$
			\item [(iv)] $r_{T^*}(x) = r(T^*)$ for every $x \in X^*\setminus \{0\}.$
			\item [(v)] If $M$ is any non-trivial closed invariant subspace for $T^*$, then
			\[
			\sigma(T^*)\subseteq\sigma(T^*|_M) \subseteq \eta(\sigma(T^*)).
			\]
		\end{enumerate}
	\end{theorem}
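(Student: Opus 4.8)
The plan is to reduce the entire statement to a single core claim and then read off the five consequences from standard local spectral theory. The core claim is:
\[
\X^*_{T^*}(F) = \{0\} \quad\text{for every closed } F \subsetneq \sigma(T^*).
\]
Since $\sigma(T^*) = \sigma(T)$, such an $F$ is a proper closed subset of $\sigma(T)$, so I can choose a point $\lambda_0 \in \sigma(T)\setminus F$; as $F$ is closed, $\lambda_0$ has positive distance to $F$, and for $\epsilon$ small enough the set $U := \sigma(T)\cap B(\lambda_0,\epsilon)$ is a non-empty relatively open subset of $\sigma(T)$ whose closure $\overline{U}$ (taken in $\C$) is still disjoint from $F$. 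The hypothesis then gives that $\X_T(\overline{U})$ is dense in $X$, and applying the glocal duality \eqref{glocal containments} to the disjoint closed pair $F$ and $\overline{U}$ yields $\X^*_{T^*}(F)\subseteq \X_T(\overline{U})^\perp = \{0\}$, since the annihilator of a dense set is trivial. The only delicate point of the whole argument lives here: one must shrink the relatively open set until its \emph{closure in} $\C$ is separated from $F$, which is exactly what lets the density hypothesis interact with the annihilator relation.

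Next I would record that $T^*$ has the SVEP, since everything downstream relies on it. For item (i), an eigenvalue $\lambda$ of $T^*$ gives a nonzero eigenvector $x^*$, and the function $z\mapsto(\lambda-z)^{-1}x^*$ is analytic off $\lambda$ and solves $(T^*-z)f(z)=x^*$, so $x^*\in\X^*_{T^*}(\{\lambda\})$. If $\sigma(T^*)$ is not a singleton then $\{\lambda\}\subsetneq\sigma(T^*)$, and the core claim forces $x^*=0$, a contradiction; hence $\sigma_{\textnormal{p}}(T^*)=\emptyset$. In either regime $\sigma_{\textnormal{p}}(T^*)$ therefore has empty interior (it is empty when $\sigma(T^*)$ is not a singleton, and contained in the singleton $\sigma(T^*)$ otherwise), so $T^*$ enjoys the SVEP.

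With SVEP available, glocal and local subspaces coincide, so for every nonzero $x\in X^*$ one has $x\in X^*_{T^*}(\sigma_{T^*}(x)) = \X^*_{T^*}(\sigma_{T^*}(x))$. Were $\sigma_{T^*}(x)$ a proper closed subset of $\sigma(T^*)$, the core claim would give $x=0$; thus $\sigma_{T^*}(x)=\sigma(T^*)$ for all nonzero $x$, which is item (iii). Item (ii) is then immediate: for closed $F$ the local spectral subspace $X^*_{T^*}(F)$ equals $X^*$ when $\sigma(T^*)\subseteq F$ and equals $\{0\}$ otherwise, both of which are closed, so $T^*$ has property $(C)$. Item (iv) follows by combining (iii) with the SVEP identity $r_{T^*}(x)=\max\{|\lambda|:\lambda\in\sigma_{T^*}(x)\}$, giving $r_{T^*}(x)=\max\{|\lambda|:\lambda\in\sigma(T^*)\}=r(T^*)$.

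Finally, for (v) I would pick a nonzero $x\in M$, available because $M$ is non-trivial. The restriction $T^*|_M$ inherits the SVEP, and any local resolvent function for $T^*|_M$ at $x$ is simultaneously one for $T^*$, whence $\sigma_{T^*}(x)\subseteq\sigma_{T^*|_M}(x)\subseteq\sigma(T^*|_M)$; together with $\sigma_{T^*}(x)=\sigma(T^*)$ from (iii) this delivers the left inclusion $\sigma(T^*)\subseteq\sigma(T^*|_M)$. The right inclusion $\sigma(T^*|_M)\subseteq\eta(\sigma(T^*))$ is the standard fact that restricting to an invariant subspace can only fill in bounded components of the resolvent set: its boundary satisfies $\partial\sigma(T^*|_M)\subseteq\sigma_{\textnormal{ap}}(T^*)\subseteq\sigma(T^*)$, and connectedness of the unbounded component of $\rho(T^*)$ rules out any point of $\sigma(T^*|_M)$ escaping $\eta(\sigma(T^*))$. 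The genuine mathematical content thus sits entirely in the core claim and its geometric set-up, and I expect the separation of $\overline{U}$ from $F$ to be the one step requiring care; the five consequences are routine applications of SVEP-based local spectral theory.
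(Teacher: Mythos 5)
Your proof is correct and follows essentially the same route as the paper's: the core claim is obtained exactly as there, by separating $F$ from the closure $\overline{U}$ of a small non-empty relatively open piece of $\sigma(T)=\sigma(T^*)$ and combining the glocal duality inclusion \eqref{glocal containments} with the density hypothesis, after which (i)--(iv) are the same routine SVEP-based deductions. The only cosmetic difference is in (v), where the paper simply cites \cite[Proposition 1.2.16 (e)]{LN} and \cite[Theorem 0.8]{RR}, while you reprove those standard facts (monotonicity of local spectra under restriction to invariant subspaces, and the boundary/approximate-point-spectrum argument for the full-spectrum inclusion) directly.
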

	
	\begin{proof}
		Let $F \subsetneq \sigma(T^*)$ be a closed subset and consider $U$ a non-empty relatively open set in $\sigma(T^*)$ such that $F\cap \overline{U} = \emptyset.$ Now, by \eqref{glocal containments}, it follows that $$\X^*_{T^*}(F) \subseteq \X_T(\overline{U})^\perp = \{0\},$$ where the last equality holds by the density of $\X_T(\overline{U})$.
		
		\smallskip
		
		First, let us prove property \normalfont{(i)}. Assume that $\sigma(T^*)$ has at least two points and take $\lambda \in \sigma(T^*)$. As a consequence,
		\[
		\ker(T^*-\lambda I)\subseteq \X^*_{T^*}(\{ \lambda \}) = \{0\},
		\]
		so $\sigma_\text{\normalfont{p}}(T^*)= \emptyset.$ In particular, $T^*$ has the SVEP and $\X^*_{T^*}(F) = X^*_{T^*}(F)$ for every closed set $F\subseteq \sigma(T^*)$.
		
		\medskip
		
		In order to prove properties \normalfont{(ii)-(v)}, first observe that $T^*$ also enjoys the SVEP when $\sigma(T^*)$ is reduced to a singleton, because in that case, its point spectrum has empty interior. To show \normalfont{(ii)}, let $F\subseteq \sigma(T^*)$ be an arbitrary closed set. Then,
		\[
		X^*_{T^*}(F) = \begin{cases} \{0\}, & \text{ if } F\subsetneq \sigma(T^*),\\ X^*, & \text{ if } F = \sigma(T^*),\end{cases}
		\]
		and so $T^*$ has the Dunford property $(C)$.
		
		\smallskip
		
		Now, both properties \normalfont{(iii)} and \normalfont{(iv)}  follow immediately from property \normalfont{(ii)}. Finally, \normalfont{(v)} is a direct application of \cite[Proposition 1.2.16 (e)]{LN} and \cite[Theorem 0.8]{RR}.
	\end{proof}

	\section{Dunford property for invertible composition operators}\label{seccion 3}
	
	In this section, we consider the Dunford property $(C)$ for invertible composition operators $C_\varphi$ in $H^p(\mathbb{D})$ and their adjoints $C_\varphi^*$. As it was pointed out previously,  the Dunford property $(C)$ will be independent of $1 < p < \infty$ and rely upon exclusively on the nature of the automorphism $\varphi$.

	\smallskip

	In 1996, Smith proved that if $\Fi$ is an elliptic or parabolic automorphism, $C_\Fi$ is generalized scalar (see \cite[Theorems 1.1 and 1.2]{Smith}), and therefore a decomposable operator. Accordingly, we are left with composition operators induced by  hyperbolic automorphisms.

	\begin{theorem}\label{propiedad (C) automorfismo hiperbolico}
		Let $\Fi$ be a hyperbolic automorphism of the unit disc $\D$. Then, $C_\Fi$ does not have the SVEP in $H^p(\mathbb{D})$ for any $1\leq p < \infty$ and $C_\Fi^*$ has the Dunford's property $(C)$ for every $1<p<\infty$.
	\end{theorem}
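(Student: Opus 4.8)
The plan is to treat the two assertions separately, both resting on an explicit description of the eigenfunctions of $C_\Fi$. After conjugating by the Cayley-type map $g(z) = (1+z)/(1-z)$, which sends $\D$ onto the right half-plane and carries the fixed points $1,-1$ of the canonical symbol $\Fi(z)=\frac{r+z}{1+rz}$ to $\infty$ and $0$, the hyperbolic automorphism $\Fi$ becomes the dilation $w \mapsto \mu w$ with $\mu = \Fi'(-1) = \frac{1+r}{1-r} > 1$. Consequently the functions $e_\alpha := g^\alpha$ satisfy $e_\alpha \circ \Fi = (g\circ\Fi)^\alpha = (\mu g)^\alpha = \mu^\alpha e_\alpha$, that is $C_\Fi e_\alpha = \mu^\alpha e_\alpha$, and a boundary-integrability computation (using $g(e^{i\theta}) = i\cot(\theta/2)$, with a pole at $z=1$ and a zero at $z=-1$) shows that $e_\alpha \in H^p(\D)$ precisely when $\PR(\alpha)$ lies in the open interval $(-1/p,1/p)$. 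As $\alpha$ runs over the resulting vertical strip $S_p$, the eigenvalues $\mu^\alpha$ sweep out the open annulus $\{\mu^{-1/p} < |\lambda| < \mu^{1/p}\}$, which is exactly the interior of the spectrum $\sigma(C_\Fi)$.

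For the failure of SVEP I fix a disc $D \subseteq \{\mu^{-1/p} < |\lambda| < \mu^{1/p}\}$ of eigenvalues. Since $\alpha \mapsto \mu^\alpha$ is a local biholomorphism (its derivative $\mu^\alpha \log\mu$ never vanishes), it admits an analytic local inverse $z \mapsto \alpha(z)$ on $D$ taking values in $S_p$. Setting $F(z) := e_{\alpha(z)}$ then produces a non-vanishing $H^p$-valued analytic map on $D$ — analyticity in the $H^p$-norm follows from the joint analyticity of $\alpha \mapsto e_\alpha$ together with that of $\alpha(z)$ — for which $(C_\Fi - zI)F(z) = (\mu^{\alpha(z)} - z)F(z) = 0$ throughout $D$. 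This contradicts the definition of SVEP, and the argument is valid verbatim for every $1 \le p < \infty$.

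For the Dunford property of $C_\Fi^*$ I intend to invoke Theorem \ref{teorema propiedad C} with $T = C_\Fi$: it suffices to prove that $\HH^p_{C_\Fi}(\overline U)$ is dense in $H^p(\D)$ for each non-empty relatively open $U \subseteq \sigma(C_\Fi)$. Any such $U$ necessarily meets the interior annulus, so $V := \{\alpha \in S_p : \mu^\alpha \in U\}$ is a non-empty open subset of $S_p$. If $\mu^\alpha \in U$, then the rank-one function $z \mapsto (\mu^\alpha - z)^{-1} e_\alpha$ is analytic on $\C \setminus \{\mu^\alpha\} \supseteq \C \setminus \overline U$ and solves $(C_\Fi - zI)f(z) = e_\alpha$, whence $e_\alpha \in \HH^p_{C_\Fi}(\overline U)$. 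It is therefore enough to show that $\Span\{e_\alpha : \alpha \in V\}$ is dense in $H^p(\D)$, and then Theorem \ref{teorema propiedad C} delivers property $(C)$ for $C_\Fi^*$ at once for $1 < p < \infty$.

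The density is the heart of the matter and is handled by duality. Suppose $h \in H^{p'}(\D)$ annihilates every $e_\alpha$ with $\alpha \in V$; the goal is $h = 0$. Since $\alpha \mapsto \pe{e_\alpha, h}$ is analytic on the connected strip $S_p$ and vanishes on the open set $V$, it vanishes throughout $S_p$ by the identity theorem. Transporting the pairing to the boundary through $g(e^{i\theta}) = i\cot(\theta/2) =: it$ rewrites the conditions $\pe{e_\alpha, h} = 0$ ($\alpha \in S_p$) as the vanishing, for all $\alpha$ in a strip, of a fixed linear combination of Mellin-type transforms of the densities induced by $h$ on the two boundary arcs $(0,\pi)$ and $(\pi,2\pi)$. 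I expect this completeness step to be the principal difficulty, since the functions $g^\alpha$ only realise a one-parameter family of branches on each arc: the vanishing of a single transform must be combined with the constraint that $h$ be the boundary value of an $H^{p'}$ function (i.e. carry only one-sided Fourier modes) to force, via injectivity of the Mellin transform, that the transported density and hence $h$ vanish identically. Once density of $\HH^p_{C_\Fi}(\overline U)$ is established for every $U$, the conclusion follows.
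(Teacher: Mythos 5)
Your overall strategy coincides with the paper's: exhibit the analytic family of eigenfunctions $e_\alpha = \left(\frac{1+z}{1-z}\right)^\alpha$, use them to violate the SVEP, and for the adjoint apply Theorem \ref{teorema propiedad C} after showing that the glocal subspaces $\HH^p_{C_\Fi}(\overline{U})$ are dense, which via the identity theorem reduces to the density of $\Span\{e_\alpha : \alpha \in S_p\}$ in $H^p(\D)$. Two of your ingredients are correct and even a bit more self-contained than the paper: your explicit failure of SVEP (local inverse $\alpha(z)$ of $\alpha \mapsto \mu^\alpha$ and $F(z) = e_{\alpha(z)}$) is valid for all $1 \le p < \infty$, where the paper simply cites Smith; and your resolvent formula $z \mapsto (\mu^\alpha - z)^{-1}e_\alpha$ showing $e_\alpha \in \HH^p_{C_\Fi}(\overline{U})$ is a correct substitute for the paper's inclusion $\ker(C_\Fi - \lambda I) \subseteq \HH^p_{C_\Fi}(\{\lambda\})$.

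However, the density of $\Span\{e_\alpha : \alpha \in S_p\}$ in $H^p(\D)$ is precisely the heart of the theorem, and your proposal leaves it unproven: you flag it yourself as ``the principal difficulty'' and offer only a Mellin-transform plan. As described, that plan has a concrete obstruction. Transporting the pairing to the boundary via $g(e^{i\theta}) = i\cot(\theta/2)$ splits it into contributions from the two arcs, and the annihilation condition becomes a single functional equation of the form $e^{i\pi\alpha/2}M_+(\alpha) + e^{-i\pi\alpha/2}M_-(\alpha) = 0$ relating \emph{two} unknown Mellin transforms; injectivity of the Mellin transform alone cannot force $M_+ \equiv M_- \equiv 0$ (indeed the statement is false for general $h \in L^{p'}(\T)$: the family $\{e_\alpha\}$ is certainly not total in $L^{p'}$-duality), and you give no mechanism by which the analyticity of $h$, i.e.\ its membership in $H^{p'}(\D)$, actually enters the computation. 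This is exactly where the paper must work hard: for $1 < p \le 2$ it invokes the Paley--Wiener theorem (following Cowen and Gallardo-Guti\'errez), and for general $p$ it transfers the problem to the band $\Pi$ and proves, via Laplace transforms \eqref{eq1-laplace}--\eqref{eq2-laplace}, approximation by discrete measures, weak-star density of rational functions in $H^\infty(\Pi)$, and finally Mazur's lemma, that $\Span\{e^{itz} : t \in \R\}$ is dense in $H^p(\Pi)$. Until your completeness step is carried out in full --- and it is not evident it can be, in the form you describe --- the proof of the Dunford property for $C_\Fi^*$ is incomplete.
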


	In order to prove such a result, our first step is to understand the set of eigenvectors of  $C_\varphi$ when $\varphi$ is a hyperbolic automorphism.	In this case, the linear fractional model establishes similarity between $C_\varphi$ and a composition operator $C_{\varphi_r}$ with hyperbolic symbol $\varphi_r$ of the canonical form:
	\[
	\varphi_r(z) := \frac{z+r}{1+rz}, \quad 0  <r < 1,
	\]
	where $\varphi_r$ fixes the points $\pm 1$, being $1$ the Denjoy-Wolff point. In this light, the spectrum of $C_{\varphi_r}$ in $H^p(\mathbb{D})$,  was characterized by Nordgren \cite{Nordgren}:
	\[
	\sigma(C_{\varphi_r}|_{H^p(\D)}) = \bigg\{\lambda \in \mathbb{C} \, : \,  \bigg(\frac{1+r}{1-r}\bigg)^{-1/p} \leq |\lambda| \leq \bigg(\frac{1+r}{1-r}\bigg)^{1/p}\bigg\}
	\]
	and its point spectrum is $\sigma_\text{\normalfont{p}}(C_{\varphi_r}|_{H^p(\D)}) = \mathrm{int}\big(\sigma(C_{\varphi_r}|_{H^p(\D)})\big)$. In fact, for each eigenvalue $\left( \frac{1+r}{1-r}\right)^\lambda$ ($-1/p<\PR(\lambda)<1/p$) of $C_{\varphi_r}$, the associated eigenspace is infinite-dimensional, and an example of associated eigenvector is given by
	\[
	w_\lambda (z) := \bigg(\frac{1+z}{1-z}\bigg)^\lambda, \quad\text{ where }\; -1/p < \mathrm{Re}(\lambda) < 1/p.
	\]
	
It is clear that the adjoint $C_\varphi^*$ of any composition operator with hyperbolic automorphism symbol $\varphi$ is similar to $C_{\varphi_r}^*$ for some $0 < r < 1$ and $\sigma(C_{\varphi_r}^*|_{H^p(\D)^*}) = \sigma(C_{\varphi_r}|_{H^p(\D)})$.
	
\smallskip
	
In order to apply Theorem \ref{teorema propiedad C} in this context, we prove that, in general, some subsets of eigenfunctions of $C_\varphi$ feature strong spanning properties in each Hardy space $H^p(\mathbb{D})$ with $1 < p < \infty$ (see \cite[Lemma 1]{CG1} in the Hilbert space setting).

	\begin{proposition}\label{densidad_hiperbolico}
		Let $\varphi : \mathbb{D} \to \mathbb{D}$ be a hyperbolic automorphism and consider the composition operator $C_\varphi : H^p(\mathbb{D}) \to H^p(\mathbb{D})$ for some $1 < p < \infty$. Then, for every set $F \subseteq \sigma_\text{\normalfont{p}}(C_\varphi)$ with a cluster point inside $\sigma_\text{\normalfont{p}}(C_\varphi)$, the linear manifold
		\[
		\mathrm{span} \big\{\!\ker(C_\varphi - \lambda I) : \lambda \in F\big\}
		\]
		is dense in $H^p(\mathbb{D})$.
	\end{proposition}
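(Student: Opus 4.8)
The plan is to reduce the statement, via the linear fractional model, to the canonical symbol $\varphi_r$ and then to a completeness property of the \emph{whole} family $\{w_\lambda\}$. Since $C_\varphi=S^{-1}C_{\varphi_r}S$ for some bounded invertible $S$ and some $0<r<1$, and a bounded invertible operator preserves density, preserves the point spectrum, and carries eigenspaces to eigenspaces, it is enough to treat $\varphi=\varphi_r$. Writing $\beta:=\frac{1+r}{1-r}>1$, each eigenvalue has the form $\beta^\lambda$ with $\lambda$ in the strip $S:=\{-1/p<\PR(\lambda)<1/p\}$ and eigenvector $w_\lambda(z)=\big(\tfrac{1+z}{1-z}\big)^\lambda$. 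As $w_\lambda\in\ker(C_{\varphi_r}-\beta^\lambda I)$, it suffices to prove that $\mathrm{span}\{w_\lambda:\beta^\lambda\in F\}$ is dense in $H^p(\D)$; by the dual pairing and Hahn--Banach this is equivalent to showing that any $g\in H^{p'}(\D)$ with $\pe{w_\lambda,g}=0$ whenever $\beta^\lambda\in F$ must be zero.

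Next I would use analyticity to promote the hypothesis on $F$ to the entire strip. The map $\lambda\mapsto w_\lambda$ is analytic as an $H^p(\D)$-valued function on $S$, so $G(\lambda):=\pe{w_\lambda,g}$ is analytic on the connected open set $S$. The exponential $\lambda\mapsto\beta^\lambda$ is a locally biholomorphic covering of the open annulus $\sigma_{\mathrm p}(C_{\varphi_r})$ by $S$; hence a sequence of distinct points of $F$ converging to a cluster point $\mu_0\in\sigma_{\mathrm p}(C_{\varphi_r})$ lifts through a local inverse to distinct points $\lambda_n\in S$ with $\beta^{\lambda_n}\in F$ and $\lambda_n\to\lambda_0\in S$. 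Thus $G$ vanishes on a set with an accumulation point in $S$, and the identity theorem forces $G\equiv0$ on $S$. The problem is thereby reduced to the completeness of the full family: any $g\in H^{p'}(\D)$ annihilating every $w_\lambda$ with $\lambda\in S$ is zero.

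For this completeness I would transport everything to a strip. The map $z\mapsto W=\frac{1+z}{1-z}$ sends $\D$ conformally onto the right half-plane, and $\zeta=\Log W$ then maps it onto $\Omega:=\{\zeta\in\C:|\mathrm{Im}\,\zeta|<\pi/2\}$, carrying $w_\lambda$ to $e^{\lambda\zeta}$ and the unit circle onto the two edges $\mathrm{Im}\,\zeta=\pm\pi/2$. Under the induced (weighted) identification of $H^{p'}(\D)$ with a Hardy space of $\Omega$, the functional $g$ corresponds to a single function $v$ analytic across $\Omega$, and $\pe{w_\lambda,g}$ becomes a sum of two integrals over the edges. On the edges $e^{\lambda\zeta}$ contributes factors $e^{\pm i\pi\lambda/2}$; shifting both integrals to the central line $\mathrm{Im}\,\zeta=0$ by Cauchy's theorem produces matching factors, and the two terms combine into a clean expression $\pe{w_\lambda,g}=c\,\cos(\pi\lambda)\,\Xi(\lambda)$, where $\Xi(\lambda)=\int_{\mathrm{Im}\,\zeta=0}e^{\lambda\zeta}v(\zeta)\,d\zeta$ is a bilateral Laplace transform of $v$. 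Since $\cos(\pi\lambda)\not\equiv0$, the vanishing of $G$ on $S$ gives $\Xi\equiv0$, and Fourier--Laplace uniqueness then yields $v\equiv0$, that is, $g=0$.

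The routine part is everything up to the factorization: the similarity reduction, the analyticity and identity-theorem step, and the conformal change of variables. The main obstacle is carrying out this last step rigorously for all $1<p<\infty$ rather than only $p=2$ (as in \cite{CG1}): one must justify the convergence of the edge integrals and the contour shift within the Hardy space of the strip, and replace the $L^2$ Paley--Wiener argument by a uniqueness statement valid on $L^{p'}$ (via Hausdorff--Young together with a Phragm\'en--Lindel\"of/contour-shift argument controlling $v$ across $\Omega$). Once this analytic bookkeeping is in place, the clean factor $\cos(\pi\lambda)$ makes the final uniqueness immediate.
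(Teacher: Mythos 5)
Your first two steps coincide with the paper's proof: the reduction to the canonical symbol $\varphi_r$ via similarity, and the vector-valued analyticity plus identity-theorem argument that promotes the hypothesis on $F$ to annihilation of the whole family $\{w_\lambda\}$ on the strip. The divergence is in the completeness step, and there the proposal has a genuine gap: the claimed factorization $\pe{w_\lambda,g}=c\,\cos(\pi\lambda)\,\Xi(\lambda)$, with $\Xi$ a convergent bilateral Laplace transform (hence analytic on the strip $S$), is false. Test it on $g\equiv 1$: by the mean value property, $\pe{w_\lambda,1}=w_\lambda(0)=1$ for every $\lambda\in S$, whereas the right-hand side vanishes at $\lambda=1/2$, a point which lies inside $S=\{|\PR(\lambda)|<1/p\}$ whenever $1<p<2$. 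So the identity cannot hold, and its derivation is flawed for every $p$. Two things are overlooked. First, the pairing is sesquilinear: what sits on the edges of $\Omega$ is $\overline{v}$, which is anti-analytic, and to obtain an integrand to which Cauchy's theorem applies you must pass to the reflection $v^*(\zeta):=\overline{v(\overline{\zeta})}$, which swaps the two edges. Second, and decisively, the Jacobian weight tying the disc pairing to a strip pairing equals $\mathrm{sech}(s)$ on each edge, and its analytic continuation from one edge across $\Omega$ is $\pm i/\sinh(\zeta)$, which has a pole at the centre of the strip (the image of $z=0$). The contour shift therefore picks up a residue, and the correct identity acquires an extra term; carrying out the computation carefully one finds, up to normalization, $\pe{w_\lambda,g}=\pi^{-1}\sin(\pi\lambda)\,\Xi(\lambda)+e^{-i\pi\lambda}\,\overline{g(0)}$, which indeed reproduces $\pe{w_\lambda,1}\equiv 1$. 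With the extra term present, vanishing of $\pe{w_\lambda,g}$ on $S$ no longer yields $\Xi\equiv 0$ directly, so the concluding Fourier--Laplace uniqueness step does not go through as written. The argument is likely repairable (e.g.\ first evaluate at the zero of the trigonometric factor to force $g(0)=0$, then conclude $\Xi\equiv 0$, then apply uniqueness), but that repair, together with the justification of the contour shifts for Hardy-class boundary data, is precisely the content that is missing.

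For contrast, the paper avoids all boundary-pairing bookkeeping, which is exactly where your proposal breaks: after the same reduction it only needs density of $\mathrm{span}\{w_{it}:t\in\R\}$, transfers the problem to $H^p(\Pi)$, and proves that $\mathrm{span}\{e^{itz}:t\in\R\}$ is weak-star dense in $H^\infty(\Pi)$ by approximating the resolvent-type functions $1/(-iz+\beta)$ (Laplace transforms) with Riemann-sum discretizations of the defining measure, then invoking weak-star density of rational functions with poles off $\overline{\Pi}$ and finally Mazur's lemma to upgrade weak density to norm density in $H^p(\Pi)$. Your dual-computation route, if completed, would have the merit of being self-contained and of quantifying the annihilator (the residue formula above is essentially a $p$-general substitute for the Paley--Wiener argument of \cite{CG1}), but as it stands the key identity it rests on is incorrect.
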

	
	\begin{proof}
		Without loss of generality, suppose that $\varphi = \varphi_r$ for some $0 < r< 1$. Denote by $B_{1/p}$ the open vertical strip $\{\lambda \in \mathbb{C} \, : \, -1/p < \mathrm{Re}(\lambda) < 1/p\}$ and consider the following $H^p(\mathbb{D})$-valued function $\varrho : B_{1/p} \to H^p(\mathbb{D})$, defined by $\varrho(\lambda) := w_\lambda$ where, as above,
		\[
		w_\lambda (z) := \bigg(\frac{1+z}{1-z}\bigg)^\lambda, \quad\text{ for each }\; -1/p < \mathrm{Re}(\lambda) < 1/p.
		\]
		Since $C_{\varphi_r} w_\lambda = \big(\tfrac{1+r}{1-r}\big)^\lambda w_\lambda$, observe that the mapping $\lambda \mapsto \big(\tfrac{1+r}{1-r}\big)^\lambda$ sends $B_{1/p}$ onto the point spectrum $\sigma_\text{\normalfont{p}}(C_{\varphi_r})$. Hence, there is a correspondence between $F$ and a set $\Lambda \subseteq B_{1/p}$ which, by continuity, has a cluster point inside $B_{1/p}$.
		
		\smallskip
		
		Now, we claim that $\varrho : B_{1/p} \to H^p(\mathbb{D})$ is an analytic function. To do so, consider $p'>1$ such that $\frac{1}{p}+\frac{1}{p'}=1$. It is enough to show that, for each $g \in H^{p'}(\D),$ the map $$ \lambda \in B_{1/p} \mapsto \pe{\varrho(\lambda),g}= \frac{1}{2\pi} \int_0^{2\pi} \left( \frac{1+e^{i\theta}}{1-e^{i\theta}} \right)^\lambda \overline{g(e^{i\theta})} d\theta $$ is holomorphic, which follows immediately by means of standard methods involving Morera's Theorem.
		
		\medskip

		\medskip
		
		At this point, consider a bounded linear functional $\eta \in H^p(\mathbb{D})^*$ such that $\langle w_\lambda, \eta \rangle = 0$ for every $\lambda \in \Lambda$. The map $\lambda \mapsto \langle w_\lambda, \eta \rangle$ is a holomorphic function on $B_{1/p}$ which annihilates on the set $\Lambda$. Taking into account that $\Lambda$ has a cluster point, we conclude that $\langle w_\lambda, \eta \rangle = 0$ for every $\lambda \in B_{1/p}$. Accordingly, our quest reduces to prove that $\mathrm{span}\{w_\lambda : \lambda \in B_{1/p}\}$ is dense in $H^p(\mathbb{D})$. To do so, we will show that
		\begin{equation}\label{density 1}
			\overline{\mathrm{span}\{w_{it} : t \in \mathbb{R}\}}^{H^p(\mathbb{D})} = H^p(\mathbb{D})
		\end{equation}
		for every $1 \leq p < \infty$. In \cite[Lemma 1]{CG1}, the authors proved the case $p=2$ by means of the Paley-Wiener Theorem, which clearly yields $1\leq p\leq 2$.
		
		\medskip
		
		Fix the principal branch of the logarithm, and consider the band
		\[
		\Pi := \{z\in \mathbb{C} : \;  -\pi/2 < \mathrm{Im}(z) <\pi/2 \}.
		\]
		Note that proving \eqref{density 1} is equivalent to prove that
		\begin{equation}\label{density 2}
			\overline{\mathrm{span}\big\{e^{\lambda z} : \,  z \in \Pi ,\;  \mathrm{Re}(\lambda)=0 \}}^{H^p(\Pi)}=H^p(\Pi),
		\end{equation}
		where the space $H^p(\Pi)$ consists of all holomorphic functions $f : \Pi \to \mathbb{C}$ for which there exists a harmonic function $u$ such that $|f(z)|^p\leq u(z)$ for all $z\in \Pi$ (we refer to Duren's book \cite[Ch. 10]{Du} for more on these spaces).

\smallskip

The assertion in \eqref{density 2} is, indeed, a particular instance of a more general situation studied in \cite{BGY} by Bracci, Gallardo-Gutiérrez and Yakubovich, where the authors give a complete characterization of those non-elliptic semigroups of holomorphic self-maps of the unit disc for which the linear span of the eigenfunctions of the generator of the corresponding semigroup of composition operators is weak-star dense in $H^\infty$ of the Koenigs domain $\Omega$ associated to the semigroup. Likewise, the authors give necessary and sufficient conditions for completeness in $H^p(\Omega)$.

\smallskip

We borrow the proof of the instance \eqref{density 2} from \cite{BGY} and include it for the sake of completeness, referring to the manuscript for more details and further results.

\medskip

Write $\lambda=it$ with $t\in \R$ and observe that $e^{itz} \in H^{\infty}(\Pi)$ for every $t\in \R$. Hence, $\|e^{itz}\|_{H^p(\Pi)}\leq C$ for some absolute constant $C > 0$. Accordingly, the integral
		\begin{equation*}\label{int-convergencia}
			\int_0^{\infty} \|e^{itz}\|_{H^p(\Pi)} e^{-t\beta} dt
		\end{equation*}
		converges uniformly on $\Pi$ for every $\beta \in \mathbb{C}$ with $\mathrm{Re}(\beta) > 0$. Given $\varepsilon>0$, take $M>0$ such that
		\[
		\int_M^{\infty}\|e^{itz}\|_{H^p(\Pi)} e^{-t\beta}\, dt<\varepsilon.
		\]
		
		Note that for those $\beta \in \mathbb{C}$ with $\mathrm{Re} (\beta) >- \pi/2 $ the integral
		\begin{equation}\label{eq1-laplace}
		\int_0^\infty  e^{it z} e^{-t \beta}\, dt= \frac{1}{-iz+\beta},
		\end{equation}
		for $z\in \Pi$.  Analogously, for those $\beta \in \mathbb{C}$ with $\mathrm{Re} (\beta) < \pi/2 $ the integral
		\begin{equation}\label{eq2-laplace}
		\int_{-\infty}^0  e^{it z} e^{-t \beta}\, dt= \frac{1}{iz-\beta},
		\end{equation}
		for $z\in \Pi$.

Denote by $F_{\beta}$ the function in \eqref{eq1-laplace}, namely $F_{\beta}(z)= \frac{1}{-iz+\beta}$ for $z\in \Pi$. Clearly, $F_{\beta}\in H^{\infty}(\Pi)$ and it is not difficult to prove that each $I_\beta^M(z) =\int_0^M e^{itz} e^{-t\beta}\, dt$ converges to $F_{\beta}$ in $H^{\infty}(\Pi)$ as $M\to \infty$. Note that $I_\beta^M$ can be seen as the Laplace transform of the complex Borel measure compactly supported in  $[0,+\infty)$ defined by $\mu=\chi_{[0,M]}(t)\, dt$. Approximating $\mu$ (in the sense of measures)  by finite linear combinations of measures $\mu_n$ given by
\begin{equation}\label{eq:approx-measure}
	\mu_n=\sum_{j=1}^{n}  \mu\bigg(\bigg[\frac {(j-1)M} n,\frac {jM} n\bigg)\bigg)\delta_{\frac{jM}{n}},
\end{equation}
it follows that the functions
\[
	P_n(z)=\int_0^{+\infty} e^{itz}d\mu_n(t), \qquad (z\in \Pi)
\]
belong to $\hbox{span}\{e^{itz}: t\geq 0\}$ for all $n \in \N$, and moreover, there exists $C>0$ such that $\sup_{z\in\Pi}|P_n(z)|\leq C$. Hence, for every $z\in \Pi$ fixed,
\[
	\lim_{n\to\infty} \big|P_n(z)-I_\beta^M(z) \big| = \lim_{n\to\infty}\left|\int_0^M e^{itz} \,d\mu_n(t)- \int_0^M e^{itz}e^{-t\beta} \,dt \right|=0,
\]
where the last limit is $0$  because the latter integral converges to $0$ as $n\to \infty$ for $z$ fixed since $\{\mu_n\}$ converges to $\mu$ in the sense of measures.  Accordingly, $\{P_n\}$ converges weak-star in $H^\infty(\Pi)$ to $I_\beta^M$.

\smallskip

Arguing similarly with \eqref{eq2-laplace}, it follows that the rational functions with simple poles outside $\overline{\Pi}$ are contained in the closure of $\hbox{span}\{e^{itz}: t\in \R\}$  in the weak-star topology of $H^\infty(\Pi)$. Now, each rational function in $H^\infty(\Pi)$ can be uniformly approximated in $\Pi$ by rational functions with simple poles. In addition, it is well-known that rational functions with poles outside $\overline{\Pi}$ are weak-star dense in $H^\infty(\Pi)$ (see, for instance, \cite[Corollary~1]{Do0}). Accordingly, $\hbox{span}\{e^{it z}: t\in \R\}$ is weak-star dense in $H^\infty(\Pi)$ and therefore, weak-star dense in $H^p(\Pi)$. Since $1<p<\infty$, it is  dense in $H^p(\Pi)$ in the weak topology and Mazur's Lemma (see \cite[Corollary 3, Chapter 2]{Diestel}) ensures the density in $H^p(\Pi)$.
	\end{proof}

As previously mentioned, in \cite{CG1} the authors established that the linear span of the eigenfunctions of any invertible composition operator of hyperbolic symbol is dense in $H^2(\mathbb{D})$. Using that fact, and reminding that for each $1 < p \leq 2$ the inclusion $H^2(\mathbb{D})\hookrightarrow H^p(\mathbb{D})$ is continuous and that $H^2(\mathbb{D})$ lies densely within $H^p(\mathbb{D})$, Proposition \ref{densidad_hiperbolico} follows directly for $1 < p < 2$ (and, by the same token, for $p=1$).

\smallskip

As a consequence of Proposition \ref{densidad_hiperbolico}, we obtain the density of certain glocal spectral subspaces associated to $C_\Fi$, which will allow us to apply Theorem \ref{teorema propiedad C} in order to obtain the desired result.

	\begin{proposition}\label{lema glocal densos automorfismo}
		Let $\varphi$ be a hyperbolic automorphism of $\D$ and $C_\varphi$ the induced composition operator in $H^p(\mathbb{D})$ for some $1 < p < \infty$. Then, the glocal spectral subspace $\mathcal{H}^p_{C_\varphi}(\overline{U})$ is dense in $H^p(\mathbb{D})$ for every non-empty relatively open subset $U$ of $\sigma(C_\varphi)$.
	\end{proposition}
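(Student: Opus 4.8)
The plan is to deduce the statement directly from Proposition \ref{densidad_hiperbolico} by observing that every eigenvector of $C_\varphi$ whose eigenvalue lies in $\overline{U}$ already sits inside the glocal spectral subspace $\mathcal{H}^p_{C_\varphi}(\overline{U})$, and then selecting a family of such eigenvectors whose span is dense. As usual I would assume $\varphi = \varphi_r$, so that $\sigma(C_\varphi)$ is the closed annulus with radii $\big(\tfrac{1+r}{1-r}\big)^{\mp 1/p}$ and $\sigma_\mathrm{p}(C_\varphi) = \mathrm{int}(\sigma(C_\varphi))$ is the corresponding open annulus.

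First I would record the glocal containment of eigenvectors: if $C_\varphi w = \mu w$ with $\mu \in \overline{U}$, then $f(z) := (\mu - z)^{-1} w$ defines an analytic $H^p(\mathbb{D})$-valued function on $\mathbb{C}\setminus\{\mu\}$ solving $(C_\varphi - zI)f(z) = w$, so that $w \in \mathcal{H}^p_{C_\varphi}(\{\mu\}) \subseteq \mathcal{H}^p_{C_\varphi}(\overline{U})$. Since glocal spectral subspaces are linear manifolds, this already gives $\mathrm{span}\{\ker(C_\varphi - \lambda I) : \lambda \in \overline{U}\cap \sigma_\mathrm{p}(C_\varphi)\} \subseteq \mathcal{H}^p_{C_\varphi}(\overline{U})$. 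The second step is to check that the set of admissible eigenvalues is rich enough to apply Proposition \ref{densidad_hiperbolico}: writing $U = V \cap \sigma(C_\varphi)$ with $V$ open and using that the open annulus $\sigma_\mathrm{p}(C_\varphi)$ is dense in the closed annulus $\sigma(C_\varphi)$ (as the two radii are distinct), the set $F := U \cap \sigma_\mathrm{p}(C_\varphi) = V \cap \sigma_\mathrm{p}(C_\varphi)$ is a non-empty open subset of $\mathbb{C}$ contained in $\sigma_\mathrm{p}(C_\varphi)$, so every one of its points is a cluster point of $F$ lying inside $\sigma_\mathrm{p}(C_\varphi)$.

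To finish, I would apply Proposition \ref{densidad_hiperbolico} to this $F$, obtaining that $\mathrm{span}\{\ker(C_\varphi - \lambda I) : \lambda \in F\}$ is dense in $H^p(\mathbb{D})$; since $F \subseteq U \subseteq \overline{U}$, the first step puts this span inside $\mathcal{H}^p_{C_\varphi}(\overline{U})$, forcing the glocal subspace to be dense as well. The argument is short because Proposition \ref{densidad_hiperbolico} carries all the analytic weight; the only point requiring care — and the closest thing to an obstacle — is verifying that a non-empty relatively open $U$ necessarily meets the point spectrum in a set possessing a cluster point inside $\sigma_\mathrm{p}(C_\varphi)$, which is precisely where I use that the interior of the annulus is dense in the annulus.
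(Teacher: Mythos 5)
Your proposal is correct and follows essentially the same route as the paper's proof: eigenvectors with eigenvalue $\mu$ lie in $\mathcal{H}^p_{C_\varphi}(\{\mu\}) \subseteq \mathcal{H}^p_{C_\varphi}(\overline{U})$, the glocal subspace is a linear manifold, and Proposition \ref{densidad_hiperbolico} supplies the density of the resulting span. If anything, your verification that $U \cap \sigma_{\mathrm{p}}(C_\varphi)$ is a non-empty open set (using density of the open annulus in the closed one) and hence has cluster points inside $\sigma_{\mathrm{p}}(C_\varphi)$ is spelled out more carefully than in the paper, which only remarks that closures of non-empty open sets have cluster points.
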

	
	\begin{proof}
		Let $U$ be a non-empty relatively open subset of $\sigma(C_\varphi)$. Then,
		\[
		\ker(C_\varphi - \lambda I) \subseteq \mathcal{H}^p_{C_\varphi}(\{\lambda\}) \subseteq \mathcal{H}^p_{C_\varphi}(\overline{U}), \quad \text{for all } \lambda \in \overline{U}.
		\]
		Taking into account that $\mathcal{H}^p_{C_\varphi}(\overline{U})$ is a linear manifold, we conclude that
		\[
		\mathrm{span}\{\ker(C_\varphi - \lambda I) : \lambda \in \overline{U} \} \subseteq \mathcal{H}^p_{C_\varphi}(\overline{U}).
		\]
		Finally, since the closure of a non-empty open set always has cluster points, our result follows directly from Proposition \ref{densidad_hiperbolico}.
	\end{proof}

With both propositions at hands, the proof of Theorem \ref{propiedad (C) automorfismo hiperbolico} is almost straightforward.

\begin{proof}[Proof of Theorem \ref{propiedad (C) automorfismo hiperbolico}]
	As mentioned above, for each $1<p<\infty$, the point spectrum of $C_\Fi$ in $H^p(\D)$ is an annulus and an argument involving the eigenfunctions yields that $C_{\varphi}$ has not the SVEP (see \cite[Theorem 1.4]{Smith}). On the other hand,  Theorem \ref{teorema propiedad C} and Proposition  \ref{lema glocal densos automorfismo} imply that $C_\Fi^*$ has the Dunford's property $(C)$ in $H^p(\D)^*$ for every $1<p<\infty$.
\end{proof}
	
As a consequence,  we are able to describe all the local spectral manifolds and characterize some of the (local) spectral properties of $C_\Fi:$
	
	\begin{corollary}\label{corolario hiperbolico automorfismo}
		Let $\varphi$ be a hyperbolic automorphism of $\mathbb{D}$ and $C_\varphi$ the induced composition operator in $H^p(\mathbb{D})$ for some  $1 < p < \infty$. Let $C_\varphi^*$ be its adjoint operator in $H^p(\mathbb{D})^*$ and $p' > 1$ such that $\tfrac{1}{p} + \tfrac{1}{p'} = 1$. Then:
		\begin{enumerate}
			\item [(i)] The point spectrum of $C_\Fi^*$ is empty.
			\item [(ii)]   $H^{p'}_{C_\Fi^*}(F) = \{0\}$ for every closed set $F\subsetneq \sigma(C_{\Fi_r}^*)$.
			\item [(iii)] $\sigma_{C_\varphi^*}(f) = \sigma(C_\varphi^*)$ for every non-zero $f \in H^p(\mathbb{D})^*$.
			\item [(iv)] $r_{C_\varphi^*}(f) = r(C_\varphi^*)$ for every non-zero $f \in H^p(\mathbb{D})^*$.
			\item [(v)] If $M$ is any non-trivial closed invariant subspace for $C_{\varphi}^*$, then
			\[
			\sigma(C_\varphi^*|_M) = \sigma(C_\varphi^*) \quad \text{ or } \quad \sigma(C_\varphi^*|_M) = \big\{\lambda \in \mathbb{C} : |\lambda|\leq r(C_\varphi^*)\big\}.
			\]
		\end{enumerate}
	\end{corollary}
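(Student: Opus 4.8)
The plan is to obtain all five assertions by applying Theorem \ref{teorema propiedad C} to the operator $T = C_\varphi$ acting on $H^p(\mathbb{D})$, whose hypotheses are exactly what Proposition \ref{lema glocal densos automorfismo} provides. First I would record the shape of the spectrum: since $C_\varphi$ is similar to $C_{\varphi_r}$ for some $0 < r < 1$, Nordgren's result \cite{Nordgren} gives that $\sigma(C_\varphi^*) = \sigma(C_\varphi)$ is the annulus
\[
\Big\{\lambda \in \mathbb{C} : \big(\tfrac{1+r}{1-r}\big)^{-1/p} \leq |\lambda| \leq \big(\tfrac{1+r}{1-r}\big)^{1/p}\Big\},
\]
which is not a singleton and whose spectral radius is $r(C_\varphi^*) = \big(\tfrac{1+r}{1-r}\big)^{1/p}$. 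Since Proposition \ref{lema glocal densos automorfismo} ensures that $\mathcal{H}^p_{C_\varphi}(\overline{U})$ is dense for every non-empty relatively open $U \subseteq \sigma(C_\varphi)$, Theorem \ref{teorema propiedad C} applies to $T = C_\varphi$.

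Granting this, assertions (i), (iii) and (iv) are immediate transcriptions of the corresponding items of Theorem \ref{teorema propiedad C}; note that (i) uses precisely that $\sigma(C_\varphi^*)$ is not reduced to a point. For (ii), Theorem \ref{teorema propiedad C} yields $\X^*_{C_\varphi^*}(F) = \{0\}$ for every closed $F \subsetneq \sigma(C_\varphi^*)$ at the level of the glocal spectral subspaces; since $C_\varphi^*$ enjoys the SVEP (as recorded in the proof of Theorem \ref{teorema propiedad C}), the glocal and local spectral subspaces coincide, so the local spectral subspace $H^{p'}_{C_\varphi^*}(F)$ is trivial, which is exactly (ii).

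The substantive point is (v). Theorem \ref{teorema propiedad C}(v) already supplies the two-sided containment
\[
\sigma(C_\varphi^*) \subseteq \sigma(C_\varphi^*|_M) \subseteq \eta(\sigma(C_\varphi^*)),
\]
and, the resolvent set of $C_\varphi^*$ having a single bounded component (the open hole $D = \{\lambda : |\lambda| < (\tfrac{1+r}{1-r})^{-1/p}\}$), the full spectrum $\eta(\sigma(C_\varphi^*))$ equals the closed disc $\{\lambda : |\lambda| \leq r(C_\varphi^*)\}$. Thus $\sigma(C_\varphi^*|_M)$ lies between the annulus and this closed disc, and the only remaining freedom is whether $D$ is contained in $\sigma(C_\varphi^*|_M)$. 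To resolve this I would argue that the alternative is all-or-nothing: for $\lambda \in D \subseteq \rho(C_\varphi^*)$ the operator $(C_\varphi^* - \lambda)|_M$ is injective with closed range, hence $\lambda \in \sigma(C_\varphi^*|_M) \cap D$ exactly when it fails to be onto $M$; pairing the analytic resolvent $\lambda \mapsto (C_\varphi^* - \lambda)^{-1}$ against $M$ and its annihilator and invoking the identity theorem on the connected set $D$ forces $\sigma(C_\varphi^*|_M) \cap D$ to be either empty or all of $D$. This is the classical structure of restriction spectra recorded in \cite[Theorem 0.8]{RR}, namely that $\sigma(C_\varphi^*|_M)$ is obtained from $\sigma(C_\varphi^*)$ by adjoining entire bounded components of the resolvent set. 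Consequently $\sigma(C_\varphi^*|_M)$ coincides either with the annulus $\sigma(C_\varphi^*)$ or with the closed disc $\{\lambda : |\lambda| \leq r(C_\varphi^*)\}$, which is precisely (v).

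I expect the dichotomy in (v) to be the main obstacle: the containment of Theorem \ref{teorema propiedad C}(v) by itself does not forbid the restriction from filling the hole $D$ only partially, so the crux is to justify the all-or-nothing principle on the single bounded complementary component, resting on the analyticity of the resolvent together with the connectedness of $D$.
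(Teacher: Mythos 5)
Your proposal is correct and follows essentially the same route as the paper: items (i)--(iv) are read off from Theorem \ref{teorema propiedad C} combined with Proposition \ref{lema glocal densos automorfismo} (using SVEP to identify glocal and local subspaces for (ii)), and (v) is settled by the containment $\sigma(C_\varphi^*) \subseteq \sigma(C_\varphi^*|_M) \subseteq \eta(\sigma(C_\varphi^*))$ together with the all-or-nothing filling of the single hole of the annulus. The only notable difference is bibliographical: the paper invokes Scroggs \cite{Scroggs} for that hole dichotomy (rather than \cite[Theorem 0.8]{RR}, which the paper uses only for the containment in the full spectrum), whereas you sketch its standard proof via the analyticity of the resolvent and the identity theorem on the connected hole, which is valid.
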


\begin{proof} Properties (i)-(iv) follow as a byproduct of Theorem \ref{teorema propiedad C} and Proposition  \ref{lema glocal densos automorfismo}. A little extra argument is needed to state (v). Observe that
$$\sigma(C_\Fi^*)\subseteq \sigma({C_\Fi^*}|_M) \subseteq \eta(\sigma(C_\Fi^*)).$$
Now, a result of Scroggs \cite{Scroggs} states that if a point $\lambda$ of a hole $K$ of $\sigma(C_\Fi^*)$ belongs to $\sigma({C_\Fi^*}|_M)$, then $K\subset \sigma({C_\Fi}|_M),$ which yields (v).
\end{proof}

\section{Dunford property for non-invertible composition operators}\label{seccion 4}
	
In this section, we characterize in $H^p(\D)$ the Dunford property $(C)$ for non-invertible composition operators $C_\Fi$ induced by linear fractional maps as well as their adjoints. Since loxodromic maps induce composition operators with disconnected spectrum in $H^p(\D)$ (and hence, they are decomposable) and the parabolic maps induce decomposable composition operators \cite{Shapiro}, we are left  with the hyperbolic non-automorphisms cases (of first and second kind).
	
\subsection{Hyperbolic non-automorphisms I}
Let $\Fi$ be a hyperbolic non-automorphism of first kind of $\D$, namely, a linear fractional map with a fixed point in $\T$ and the other one in $\widehat{\C}\setminus \overline{\D}.$

\begin{theorem}\label{teorema hiperbolico no-automorfismo}
Let $\Fi$ be a hyperbolic non-automorphism of first kind of $\D$. Then, $C_\Fi$ does not have the SVEP in $H^p(\mathbb{D})$ for any $1\leq p < \infty$ and $C_\Fi^*$ has the Dunford's property $(C)$.
\end{theorem}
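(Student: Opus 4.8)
The plan is to mirror the three-step strategy of the hyperbolic automorphism case, taking advantage of the fact that a non-automorphism of first kind has only one finite boundary fixed point. After the normalization of Table \ref{tabla} I may assume $\Fi(z)=rz+1-r$ with $0<r<1$, whose Denjoy--Wolff point is $1\in\T$ with $\Fi'(1)=r$. The map $\sigma(z)=(1-z)^{-1}$ intertwines $\Fi$ with the dilation $w\mapsto r^{-1}w$, and a direct computation gives
\[
C_\Fi w_\mu=r^{-\mu}\,w_\mu,\qquad w_\mu(z):=(1-z)^{-\mu},
\]
where $w_\mu\in H^p(\D)$ precisely when $\PR(\mu)<1/p$. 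Thus $\mu\mapsto r^{-\mu}$ carries the half-plane $\{\PR(\mu)<1/p\}$ onto the punctured disc $\{0<|\lambda|<r^{-1/p}\}$, which by \cite[Chapter 8]{CMc} is the interior of the spectrum $\sigma(C_\Fi)=\{\lambda\in\C:|\lambda|\le r^{-1/p}\}$; in particular $\sigma_{\mathrm p}(C_\Fi)$ has non-empty interior.

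First I would establish the failure of the SVEP. On a small disc around $\lambda_0=1$ inside the point spectrum I pick the holomorphic branch of $\mu(\lambda)=-\log\lambda/\log r$, so that $\PR(\mu(\lambda))$ stays below $1/p$. Then $\lambda\mapsto f(\lambda):=(1-z)^{-\mu(\lambda)}$ is a non-vanishing analytic $H^p(\D)$-valued function with $(C_\Fi-\lambda I)f(\lambda)=(r^{-\mu(\lambda)}-\lambda)f(\lambda)=0$, since $r^{-\mu(\lambda)}=e^{\log\lambda}=\lambda$. Hence $C_\Fi$ cannot have the SVEP for any $1\le p<\infty$, exactly in the spirit of \cite[Theorem 1.4]{Smith}.

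For the Dunford property of $C_\Fi^*$ I would invoke Theorem \ref{teorema propiedad C}, so that it suffices to prove the glocal spectral subspace $\mathcal{H}^p_{C_\Fi}(\overline U)$ dense in $H^p(\D)$ for every non-empty relatively open $U\subseteq\sigma(C_\Fi)$. As in Proposition \ref{lema glocal densos automorfismo}, the inclusion $\Span\{\ker(C_\Fi-\lambda I):\lambda\in\overline U\}\subseteq\mathcal{H}^p_{C_\Fi}(\overline U)$ reduces the matter to a spanning statement of the type of Proposition \ref{densidad_hiperbolico}: for every $F\subseteq\sigma_{\mathrm p}(C_\Fi)$ with a cluster point in $\sigma_{\mathrm p}(C_\Fi)$, the span of the corresponding eigenfunctions is dense. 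Transporting $F$ to a subset of $\{\PR(\mu)<1/p\}$ having a cluster point and using that $\mu\mapsto w_\mu$ is analytic into $H^p(\D)$ (via Morera, as in Proposition \ref{densidad_hiperbolico}), the identity theorem forces the holomorphic function $\mu\mapsto\pe{w_\mu,\eta}$ to vanish throughout $\{\PR(\mu)<1/p\}$ for any annihilating functional $\eta\in H^p(\D)^*$.

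Here is where the present case is genuinely easier than the automorphism one. Because the $H^p$-membership constraint $\PR(\mu)<1/p$ is only one-sided, I may evaluate at the non-positive integers $\mu=0,-1,-2,\dots$, for which $w_{-n}(z)=(1-z)^{n}$ is a polynomial of exact degree $n$; these span all of $\C[z]$, which is dense in $H^p(\D)$. Consequently $\eta=0$, the eigenfunction span is dense, and no appeal to the half-plane completeness behind \eqref{density 2} and \cite{BGY} is required. With the density of $\mathcal{H}^p_{C_\Fi}(\overline U)$ secured, Theorem \ref{teorema propiedad C} yields at once that $C_\Fi^*$ has the Dunford property $(C)$. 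The only points demanding care are the correct identification of $\sigma(C_\Fi)$ and $\sigma_{\mathrm p}(C_\Fi)$ together with the analyticity of $\mu\mapsto w_\mu$; the completeness step, which was the main obstacle for hyperbolic automorphisms, collapses here to the elementary density of polynomials.
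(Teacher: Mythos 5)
Your proposal is correct and takes essentially the same approach as the paper's proof: the same canonical form $\Fi_r(z)=rz+1-r$, the same eigenfunctions ($w_\mu=(1-z)^{-\mu}$ is exactly the paper's $e_s$ with $s=-\mu$), the same local analytic selection of eigenvectors to defeat the SVEP, and the same application of Theorem \ref{teorema propiedad C} after proving density of the glocal subspaces via the identity theorem and the polynomial eigenfunctions $(1-z)^n$. The only differences are cosmetic (the reparametrization and the choice of the open set on which the analytic eigenvector field is defined).
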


If $\Fi$ is a hyperbolic non-automorphism of first kind of $\D$, then $\Fi$ is conjugated to one of its canonical form:
\begin{equation}\label{modelo hiperbolico no-automorfismo}
		\Fi_r(z) = rz+(1-r), \qquad 0<r<1.
	\end{equation}
Accordingly, in order to prove Theorem \ref{teorema hiperbolico no-automorfismo}, it suffices to consider the transformations described in \eqref{modelo hiperbolico no-automorfismo}. It is straightforward that the functions
\[
	e_s(z) = (1-z)^s, \qquad \PR(s) > - 1/p
\]
belong to $H^p(\D)$ and satisfy
\[
	C_{\Fi_r}e_s = r^se_s.
\]
Thus, $\{r^s : \PR(s) > -1/p\} = D(0, r^{-1/p}) \subseteq \sigma_\text{\normalfont{p}}(C_{\Fi_r}).$ Indeed, Kamowitz \cite{Kamowitz} showed that
\[
	\sigma(C_{\Fi_r})= \overline{D(0, r^{-1/p})}.
\]

Now, we proceed with the proof of Theorem \ref{teorema hiperbolico no-automorfismo}.

\begin{proof}[Proof of Theorem \ref{teorema hiperbolico no-automorfismo}]
	We may assume without loss of generality that $\Fi = \Fi_r$ for some $0<r<1$ given by \eqref{modelo hiperbolico no-automorfismo}. Note that the map
	\[
		s \in \big\{s \in \C : \PR(s)>-1/p \big\} \longmapsto e_s \in H^p(\D)
	\]
	is a vector-valued analytic map and consider $V = \{ z \in D(0,r^{-1/p}): \PR(z) > 0\}$. The map $f:V \rightarrow H^p(\D)$ given by
	\[
		f(\lambda) = e_{\frac{\Log(\lambda)}{\Log(r)}},
	\]
	where $\Log:\C\setminus (-\infty,0] \rightarrow \C$ is the principal branch of the logarithm, is also a vector-valued analytic map that satisfies
	\[
		(C_{\Fi_r}-\lambda)f(\lambda) = 0, \qquad \text{for every }\lambda\in V,
	\]
	which proves that $C_{\Fi_r}$ does not have the SVEP, as claimed.
	
	\smallskip
	
	Now, we will apply Theorem \ref{teorema propiedad C} to derive the Dunford's property $(C)$ for the adjoint $C_{\Fi_r}^*$. First, let us show that $\HH^{p}_{C_{\Fi_r}}(\overline{U})$ is dense in $H^{p}(\D)$ for every non-empty relatively open subset $U$ of $\sigma(C_{\Fi_r})$.
	
	\smallskip
	
	Since $\HH^{p}_{C_{\Fi_r}}(\overline{U})$ contains $\ker (C_{\Fi_r}-\lambda)$ for every $\lambda \in \overline{U}$, it follows that
	\[
		\Span \{e_s : r^s  \in \overline{U} \}\subseteq \HH^{p}_{C_{\Fi_r}}(\overline{U}).
	\]
	
	Now, consider $\eta \in \Span \{e_s : r^s \in \overline{U} \}^\perp \subset H^{p}(\D)^*.$ Then, the holomorphic map
	\[
		s \in \big\{ s \in \C: \PR(s)  > -1/p\big\} \longmapsto \langle e_s,\eta \rangle\in \C
	\]
	vanishes in every $s$ such that $r^s \in \overline{U}.$ Since $\overline{U}$ has accumulation points, it follows that $\langle e_s,\eta\rangle = 0$ for every $s \in \C$ with $\PR(s) > -1/p$. Now, observe that
	\[
		\Span\{e_n : n \in \N \cup\{0\}\} \subseteq \Span \big\{e_s : \PR(s)> -1/p \big\}.
	\]
	But $\Span\{e_n : n \in \N \cup\{0\}\}$ contains all the polynomials, hence it follows that $f\equiv 0$ and so $\HH_{C_{\Fi_r}}^{p}(\overline{U})$ is dense as claimed. In conclusion,
	Theorem \ref{teorema propiedad C} yields that $C_{\Fi_r}^*$ has the Dunford property in $H^p(\D)^*$.
\end{proof}

As a consequence, we deduce the following:
	
	\begin{corollary}\label{corolario hiperbolico no-automorfismo}
		Let $\Fi:\D \rightarrow \D$ be a hyperbolic non-automorphism map of first kind and $C_{\Fi}$ the induced composition operator in $H^p(\D)$ for some $1< p < \infty$. Let $C_\varphi^*$ be its adjoint operator in $H^p(\D)^*$ and $p'>1$ such that $\tfrac{1}{p} + \tfrac{1}{p'} = 1$. Then:
		\begin{enumerate}
			\item [(i)] The point spectrum of $C_\Fi^*$ is empty.
			\item [(ii)] $H^{p'}_{C_\Fi^*}(F) = \{0\}$ for every closed set $F\subsetneq \sigma(C_{\Fi_r}^*)$.
			\item [(iii)] $\sigma_{C_\Fi^*}(f) = \sigma(C_\Fi^*)$ for every non-zero $f \in H^{p}(\D)^*$.
			\item [(iv)] $r_{C_\Fi^*}(f) = r(C_\Fi^*)$ for every non-zero $f \in H^{p}(\D)^*$.
			\item [(v)] If $M$ is a non-trivial closed invariant subspace for $C_{\Fi}^*$, then $\sigma(C_\Fi^*|_M) =\sigma(C_\Fi^*).$
		\end{enumerate}
	\end{corollary}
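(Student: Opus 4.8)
The plan is to derive all five statements as consequences of Theorem \ref{teorema hiperbolico no-automorfismo}, in exact parallel with the proof of Corollary \ref{corolario hiperbolico automorfismo}. Since the theorem establishes that $C_\Fi^*$ has the Dunford property $(C)$, and since the hypothesis of Theorem \ref{teorema propiedad C} has been verified for $C_{\Fi_r}$ (namely the density of the glocal subspaces $\HH^p_{C_{\Fi_r}}(\overline{U})$), I would simply invoke Theorem \ref{teorema propiedad C} directly. Properties (i), (ii), (iii), and (iv) are then immediate: (i) is assertion (i) of Theorem \ref{teorema propiedad C}, using that $\sigma(C_\Fi^*)=\sigma(C_\Fi)=\overline{D(0,r^{-1/p})}$ is not a singleton; (ii) is precisely the opening conclusion $\X^*_{T^*}(F)=\{0\}$ combined with the fact (established within the proof of Theorem \ref{teorema propiedad C}) that SVEP makes glocal and local subspaces coincide, so $H^{p'}_{C_\Fi^*}(F)=\{0\}$; and (iii)–(iv) follow from property $(C)$ exactly as parts (iii)–(iv) of that theorem.

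The only assertion requiring genuine work is (v), and here the situation is in fact simpler than its counterpart in Corollary \ref{corolario hiperbolico automorfismo}. By part (v) of Theorem \ref{teorema propiedad C}, any non-trivial closed invariant subspace $M$ satisfies
\[
\sigma(C_\Fi^*)\subseteq \sigma(C_\Fi^*|_M)\subseteq \eta(\sigma(C_\Fi^*)).
\]
The key observation is that for a hyperbolic non-automorphism of first kind, the spectrum $\sigma(C_\Fi^*)=\overline{D(0,r^{-1/p})}$ is a closed disc, hence has no holes: its complement $\widehat{\C}\setminus\overline{D(0,r^{-1/p})}$ is connected and unbounded, so there are no bounded components of the resolvent to adjoin. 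Therefore $\eta(\sigma(C_\Fi^*))=\sigma(C_\Fi^*)$, and the two inclusions collapse to the single equality $\sigma(C_\Fi^*|_M)=\sigma(C_\Fi^*)$, which is exactly (v). This is precisely why the statement here is a clean equality, in contrast to the hyperbolic-automorphism case where $\sigma(C_\Fi^*)$ is an annulus with a hole and one must appeal to Scroggs's theorem to handle the two possibilities.

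Thus I would write the proof as: \emph{Properties (i)–(iv) follow directly from Theorem \ref{teorema propiedad C} together with the density established in the proof of Theorem \ref{teorema hiperbolico no-automorfismo}. For (v), Theorem \ref{teorema propiedad C}(v) gives $\sigma(C_\Fi^*)\subseteq\sigma(C_\Fi^*|_M)\subseteq\eta(\sigma(C_\Fi^*))$; since $\sigma(C_\Fi^*)=\overline{D(0,r^{-1/p})}$ is a disc and hence equals its full spectrum $\eta(\sigma(C_\Fi^*))$, we obtain $\sigma(C_\Fi^*|_M)=\sigma(C_\Fi^*)$.} I do not anticipate any real obstacle: the entire corollary is a packaging of Theorem \ref{teorema propiedad C}, and the only point demanding attention is recording that the full spectrum equals the spectrum because the disc has no holes.
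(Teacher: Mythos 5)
Your proposal is correct and takes essentially the same route as the paper, which states this corollary without a written proof precisely because it is a direct packaging of Theorem \ref{teorema propiedad C} applied through the density of the glocal subspaces established in the proof of Theorem \ref{teorema hiperbolico no-automorfismo}. Your key observation for (v) — that $\sigma(C_\Fi^*)=\overline{D(0,r^{-1/p})}$ is a closed disc with connected complement, so $\eta(\sigma(C_\Fi^*))=\sigma(C_\Fi^*)$ and the two inclusions collapse to an equality without any appeal to Scroggs's theorem — is exactly the reason the paper can assert a clean equality here, in contrast to Corollary \ref{corolario hiperbolico automorfismo}(v).
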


\subsection{Hyperbolic non-automorphisms II}
	Now, we consider $\Fi: \D\rightarrow \D$  hyperbolic non-automorphisms of second kind of $\D$, namely, linear fractional maps with a fixed point in $\D$ and the other one lying in $\T$.
	
	\begin{theorem}\label{teorema hiperbolico no-automorfismo ii}
		Let $\Fi : \D\rightarrow \D$ be a hyperbolic non-automorphism of second kind. Then, $C_\Fi$ has the Dunford's property $(C)$ in $H^p(\D)$ for every $1<p<\infty$. Moreover, $C_\Fi^*: H^p(\D)^* \to H^p(\D)^*$ does not have the SVEP for any $1<p<\infty$.
	\end{theorem}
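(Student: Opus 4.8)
The plan is to reduce to the canonical symbol $\Fi_r(z)=\frac{rz}{1-(1-r)z}$ ($0<r<1$), whose Denjoy--Wolff point is the interior fixed point $0$, with $\Fi_r'(0)=r$, and whose second fixed point $1\in\T$ is repelling, with $\Fi_r'(1)=1/r$. Two preliminary observations drive everything. First, since $\Fi_r(0)=0$, Littlewood's subordination theorem gives $\norm{C_{\Fi_r}^n}\leq 1$ for all $n$, so the spectral radius of $C_{\Fi_r}$ equals $1$; and the Koenigs map $\sigma(z)=\tfrac{z}{1-z}$ (which satisfies $\sigma\circ\Fi_r=r\,\sigma$) forces every holomorphic eigenfunction of $C_{\Fi_r}$ to be a multiple of some $\sigma^{k}$, with eigenvalue $r^{k}$. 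As $\sigma^{k}\notin H^p(\D)$ for $k\geq 1$, the only eigenvalue of $C_{\Fi_r}$ on $H^p(\D)$ is $1$, realized by the constants, so $\sigma_{\mathrm p}(C_{\Fi_r})=\{1\}$ has empty interior and $C_{\Fi_r}$ enjoys the SVEP. Second, because $\Fi_r(0)=0$ the decomposition $H^p(\D)=\C\mathbf 1\oplus zH^p(\D)$ is by closed $C_{\Fi_r}$-invariant subspaces, and the projection $f\mapsto f(0)\mathbf 1$ commutes with $C_{\Fi_r}$; since property $(C)$ respects finite direct sums, it is enough to handle the eigenvalue-free summand $C_{\Fi_r}|_{zH^p(\D)}$.

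I would first settle the failure of the SVEP for $C_{\Fi_r}^*$, dualizing the first-kind construction. Whereas for $\Fi_r(z)=rz+(1-r)$ the abundance of eigenvectors $e_s=(1-z)^s$ of $C_{\Fi_r}$ came from the attracting boundary fixed point, here it is the \emph{repelling} boundary fixed point $1$ that produces an analytic family of eigenvectors of the \emph{adjoint}. Concretely, linearizing at $1$ (equivalently, passing to the Koenigs half-plane model, in which $C_{\Fi_r}$ becomes the dilation $F(w)\mapsto F(rw)$ and the adjoint of a dilation carries fractional-power eigenvectors), one obtains $g_\lambda\in H^{p'}(\D)$ depending holomorphically on $\lambda$ over a nonempty open set $W\subseteq\sigma(C_{\Fi_r})$ with $C_{\Fi_r}^*g_\lambda=\lambda\,g_\lambda$ for all $\lambda\in W$. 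The existence of one such nonzero analytic solution of $(C_{\Fi_r}^*-\lambda)g_\lambda=0$ on an open set already violates the SVEP; verifying that the $g_\lambda$ are genuine $H^{p'}$-eigenvectors and that their eigenvalues sweep out an open region is a direct computation with the explicit symbol.

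The substantive point is the Dunford property $(C)$ for $C_{\Fi_r}$, and here Theorem \ref{teorema propiedad C} is unavailable: applying it to $T=C_{\Fi_r}^*$ would yield $\sigma_{\mathrm p}(C_{\Fi_r})=\emptyset$, contradicting the constant eigenfunction; equivalently, the glocal subspaces of $C_{\Fi_r}^*$ are \emph{not} all dense. Instead I would prove directly the key fact that $\sigma_{C_{\Fi_r}}(f)=\sigma(C_{\Fi_r})$ for every nonzero $f\in zH^p(\D)$, while $\sigma_{C_{\Fi_r}}(\mathbf 1)=\{1\}$. Granting this, the splitting of the first paragraph gives $\sigma_{C_{\Fi_r}}(c\mathbf 1\oplus f_0)=\sigma(C_{\Fi_r})$ as soon as $f_0\neq 0$, so for any closed $F\subsetneq\sigma(C_{\Fi_r})$ one finds $H^p_{C_{\Fi_r}}(F)=\HH^p_{C_{\Fi_r}}(F)$ equal to $\{0\}$ when $1\notin F$ and to $\C\mathbf 1$ when $1\in F$, both closed, while $H^p_{C_{\Fi_r}}(\sigma(C_{\Fi_r}))=H^p(\D)$; this is exactly property $(C)$, and it also identifies every local spectrum. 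To establish the key fact I would argue by contradiction: if some $\lambda_0\in\sigma(C_{\Fi_r})$ lay in $\rho_{C_{\Fi_r}}(f)$ with local resolvent $F_f$, then pairing the identity $(C_{\Fi_r}-\lambda)F_f(\lambda)=f$ against the adjoint eigenvectors $g_\lambda$ of the second paragraph gives $\pe{f,g_\lambda}=\pe{F_f(\lambda),(C_{\Fi_r}^*-\lambda)g_\lambda}=0$ for $\lambda$ near $\lambda_0$, whence $\pe{f,g_\lambda}=0$ for all admissible $\lambda$ by analyticity. The main obstacle is precisely the \emph{totality} of these adjoint eigenvectors: one must show that the $g_\lambda$, as $\lambda$ ranges over the eigenvalue set, span a subspace of $H^{p'}(\D)$ whose annihilator is exactly $\C\mathbf 1$ (equivalently, that their closed span is $zH^{p'}(\D)$), so that $\pe{f,g_\lambda}\equiv 0$ forces $f\in\C\mathbf 1\cap zH^p(\D)=\{0\}$. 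This totality statement is the dual counterpart of Proposition \ref{densidad_hiperbolico} and is the genuinely new ingredient; its proof, reducing as before to a density computation for the boundary-concentrated functions $g_\lambda$ in the spirit of \eqref{density 2}, is the hard part of the argument.
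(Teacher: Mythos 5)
Your reduction to the canonical symbol, the Koenigs-map computation of $\sigma_{\mathrm{p}}(C_{\Fi_r})=\{1\}$ (hence the SVEP for $C_{\Fi_r}$), the splitting $H^p(\D)=\pe{1}\oplus zH^p(\D)$, and your observation that Theorem \ref{teorema propiedad C} cannot be applied here are all correct. The genuine problem is your ``key fact'': it is \emph{false}. For nonzero $f\in zH^p(\D)$ one has $\sigma_{C_{\Fi_r}}(f)=\overline{D(0,r^{1/p})}$, which omits the isolated spectral point $1$. Indeed, $zH^p(\D)$ is a complemented $C_{\Fi_r}$-invariant subspace, and the restriction $C_{\Fi_r}|_{zH^p(\D)}$ has spectrum $\overline{D(0,r^{1/p})}$ (it is similar to $(rC_{\psi_r})^*$ with $\psi_r(z)=rz+1-r$, by the Bourdon--Shapiro identification that the paper isolates as Lemma \ref{lema semejantes}); the ordinary resolvent of this restriction therefore furnishes an analytic local resolvent for $f$ in a neighbourhood of $1$, so $1\notin\sigma_{C_{\Fi_r}}(f)$. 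Your proposed contradiction argument cannot rescue the claim, and it breaks exactly where the claim is false: the adjoint eigenvalues sweep only the punctured disc $D(0,r^{1/p})\setminus\{0\}$ (the eigenvectors are $z(1-z)^s$ with eigenvalue $r^{1+s}$, $\PR(s)>-1/p'$), so for $\lambda_0=1$ there are no eigenvectors $g_\lambda$ with eigenvalue near $\lambda_0$ to pair against; the only eigenvector of $C_{\Fi_r}^*$ with eigenvalue $1$ is the constant, and $\pe{f,1}=0$ holds automatically for every $f\in zH^p(\D)$, yielding no contradiction. Consequently your classification of the local spectral subspaces is also wrong: $H^p_{C_{\Fi_r}}\big(\overline{D(0,r^{1/p})}\big)=zH^p(\D)$, not $\{0\}$.

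The error is not cosmetic, because your route to property $(C)$ needs precisely the ingredient you skipped. Your pairing-plus-totality argument does prove, correctly, the inclusion $\overline{D(0,r^{1/p})}\subseteq\sigma_{C_{\Fi_r}}(f)$ for nonzero $f\in zH^p(\D)$ (the totality statement is true: $\Span\{z(1-z)^s\}$ contains $z\cdot\{\text{polynomials}\}$, hence is dense in $zH^{p'}(\D)$, whose preannihilator is $\pe{1}$), and this settles closedness of $H^p_{C_{\Fi_r}}(F)$ whenever $\overline{D(0,r^{1/p})}\not\subseteq F$ or $\sigma(C_{\Fi_r})\subseteq F$. But in the remaining case $\overline{D(0,r^{1/p})}\subseteq F$, $1\notin F$, the subspace in question is $\{f\in zH^p(\D):1\notin\sigma_{C_{\Fi_r}}(f)\}$, and to see that it equals all of $zH^p(\D)$ (hence is closed) you must know that $1\notin\sigma\big(C_{\Fi_r}|_{zH^p(\D)}\big)$. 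Nothing in your sketch delivers this; your second paragraph (the analytic family $g_\lambda$ for the adjoint, deferred to ``a direct computation'') also rests on the same unproven identification of the restriction. This is exactly what the paper's Lemma \ref{lema semejantes} provides: the similarity $C_{\Fi_r}\sim I_1\oplus(rC_{\psi_r})^*$, which at one stroke gives the restriction's spectrum, the adjoint eigenvectors, and reduces the whole theorem --- property $(C)$, the failure of the SVEP for $C_{\Fi_r}^*$, and the local spectra --- to the first-kind case, Theorem \ref{teorema hiperbolico no-automorfismo}. Once that similarity is in hand, your remaining apparatus is no longer needed.
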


	In order to prove this result, we consider the following maps, which have $0$ and $1$ as fixed points:
	\begin{equation}\label{modelo hiperbolico no-automorfismo ii}
		\Fi_r(z) = \frac{rz}{1-(1-r)z}, \qquad 0<r<1.
	\end{equation}
	It turns out that every hyperbolic non-automorphism of second kind is conjugated to one of the maps of the form \eqref{modelo hiperbolico no-automorfismo ii}. The spectrum of these composition operators $C_{\Fi_r}$ in $H^p(\D)$, for each $1\leq p < \infty$, was characterized by Kamowitz. Specifically,
	\[
		\sigma(C_{\Fi_r}) = \overline{D(0,r^{1/p})}\cup \{1\}.
	\]
	
	Our approach is inspired by a construction of Shapiro that appears in \cite[p. 864]{Shapiro2} and traces back to a joint paper with Bourdon \cite{BS}. The author shows that the operator $C_{\Fi_r}$ acting on the Hilbert space $H^2(\D)$ is unitarily equivalent to $I_1\oplus (rC_{\psi_r})^*,$  where $I_1$ is the identity acting on the one-dimensional space of constants, $\psi_r(z) = rz+(1-r)$ is a hyperbolic non-automorphism of first kind and $(rC_{\psi_r})^*$ acts in $H^2(\D)$.

\begin{lemma}\label{lema semejantes}
		Let $\Fi_r : \D \to \D$ be a hyperbolic non-automorphism of second kind of the form \eqref{modelo hiperbolico no-automorfismo ii}. Then, for every $1 < p < \infty,$ the operator $C_{\Fi_r}$  in $H^p(\D)$ is similar to the operator $I_1 \oplus (rC_{\psi_r})^*$, where $\psi_r(z) = rz+(1-r)$ and $(rC_{\psi_r})^*$ acts in $H^p(\D)$.
\end{lemma}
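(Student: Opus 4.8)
The plan is to obtain the decomposition from the invariant splitting forced by the interior fixed point of $\Fi_r$, and then to realize Shapiro's identification of the remaining summand through a single, explicitly computable intertwiner that is bounded and invertible on every $H^p(\D)$. First, since $\Fi_r(0)=0$, both the line of constants $\C$ and the subspace $zH^p(\D)=\{f\in H^p(\D):f(0)=0\}$ are invariant under $C_{\Fi_r}$, and $H^p(\D)=\C\oplus zH^p(\D)$ is a topological direct sum because the projection $f\mapsto f(0)$ is bounded (point evaluation at $0$ is continuous on $H^p(\D)$). As $C_{\Fi_r}$ acts as the identity on the constants, it is similar to $I_1\oplus C_{\Fi_r}|_{zH^p(\D)}$, so everything reduces to identifying the second summand with $(rC_{\psi_r})^*$.

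The bridge between the two symbols is the Cowen adjoint formula: the Krein adjoint of the first–kind symbol $\psi_r$ is exactly $\Fi_r$, which gives, with $w(z):=1-(1-r)z$, the identity $C_{\psi_r}^*=M_{w^{-1}}C_{\Fi_r}$, where $C_{\psi_r}$ is taken on $H^{p'}(\D)$ and its adjoint acts on $H^p(\D)=H^{p'}(\D)^*$; equivalently $(rC_{\psi_r})^*f=\tfrac{r}{w}\,(f\circ\Fi_r)$. The decisive elementary identities here are $1-\Fi_r(z)=\tfrac{1-z}{w(z)}$ and, crucially, $\tfrac{\Fi_r(z)}{z}=\tfrac{r}{w(z)}$, the latter encoding $\Fi_r'(0)=r$ and being precisely what produces the scalar $r$ in the statement.

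With this in hand I would take $V:=M_{1/z}:zH^p(\D)\to H^p(\D)$, $Vf=f/z$, which is an isometric isomorphism (the inverse of the isometry $M_z$). Writing the adjoint as the weighted composition $(rC_{\psi_r})^*f=\tfrac{r}{w}\,(f\circ\Fi_r)$ and using $\tfrac{r}{w}=\tfrac{\Fi_r}{z}$ as functions, a one–line computation gives, for $f\in zH^p(\D)$, the chain $(rC_{\psi_r})^*Vf=\tfrac{r}{w}\big((f/z)\circ\Fi_r\big)=\tfrac{\Fi_r}{z}\cdot\tfrac{f\circ\Fi_r}{\Fi_r}=\tfrac{f\circ\Fi_r}{z}=V\,C_{\Fi_r}f$, so that $V\,C_{\Fi_r}|_{zH^p(\D)}=(rC_{\psi_r})^*\,V$. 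Thus $C_{\Fi_r}|_{zH^p(\D)}$ is similar to $(rC_{\psi_r})^*$, and combined with the first paragraph this yields $C_{\Fi_r}\cong I_1\oplus(rC_{\psi_r})^*$, as claimed. (As a consistency check, $\sigma(I_1\oplus(rC_{\psi_r})^*)=\{1\}\cup\overline{D(0,r^{1/p})}$ agrees with Kamowitz's spectrum of $C_{\Fi_r}$.)

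The main obstacle is the Cowen adjoint identity itself, since its classical derivation is Hilbertian (via reproducing kernels) and is unavailable on $H^p(\D)$ for $p\neq 2$. I would prove it directly: both $M_{w^{-1}}C_{\Fi_r}$ on $H^p(\D)$ and $C_{\psi_r}$ on $H^{p'}(\D)$ are bounded, so it suffices to verify $\langle M_{w^{-1}}C_{\Fi_r}f,u\rangle=\langle f,C_{\psi_r}u\rangle$ for $u$ ranging over the family $e_s$, which spans a dense subspace of $H^{p'}(\D)$ by the first–kind analysis of Section~\ref{seccion 4} applied to the exponent $p'$; there the equality collapses to a boundary change of variables in which the Krein–adjoint relation $\sigma_{\psi_r}=\Fi_r$ enters. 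Every other step is manifestly independent of $p$, which is exactly why the single intertwiner $M_{1/z}$ delivers the similarity simultaneously for all $1<p<\infty$.
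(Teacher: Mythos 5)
Your proposal is correct, and structurally it is the mirror image of the paper's proof of Lemma \ref{lema semejantes}: same decomposition (constants plus functions vanishing at $0$, available because $\Fi_r(0)=0$) and same intertwiner $M_{1/z}$, but run on $H^p(\D)$ directly instead of on the dual side. The paper works with $C_{\Fi_r}^*$ on $H^{p'}(\D)$: it notes that $1\otimes 1$ commutes with $C_{\Fi_r}^*$, so $H_0^{p'}(\D)$ is invariant, verifies $C_{\Fi_r}^*|_{H_0^{p'}(\D)}=M_z\big(rC_{\psi_r}\big)M_{1/z}$ by testing against the monomials $z^n$ (citing \cite{BS}), and only at the end takes adjoints to return to $C_{\Fi_r}$. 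You instead split $C_{\Fi_r}$ itself as $I_1\oplus C_{\Fi_r}|_{zH^p(\D)}$ and identify the second summand with $(rC_{\psi_r})^*$ through the explicit weighted-composition realization $(rC_{\psi_r})^*f=\tfrac{r}{w}\,(f\circ\Fi_r)$, the intertwining then being the one-line computation with $\tfrac{r}{w}=\tfrac{\Fi_r}{z}$; this identity is precisely the adjoint of the paper's identity, so the two proofs carry the same content. What your route buys is transparency and self-containedness: the similarity is exhibited by an explicit isometry, with no appeal to \cite{BS} and no final dualization. What it owes in exchange is Cowen's adjoint formula on $H^p$ for $p\neq 2$, which you correctly isolate as the only genuine obstacle.

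On that one step, however, your proposed verification is shakier than you suggest. Testing against the eigenfunctions $e_s$ does not ``collapse to a boundary change of variables'': $\Fi_r$ maps $\T$ onto a circle internally tangent to $\T$ at $1$, not onto $\T$, so substituting $\zeta=\Fi_r(e^{i\theta})$ takes the integral off the unit circle, and the conjugation in the pairing prevents a naive contour deformation back; one would end up redoing a Taylor-coefficient computation of the type in \cite{BS}. The cleaner repair, fully consistent with your architecture, is this: the sesquilinear identity $\langle M_{1/w}C_{\Fi_r}f,\,u\rangle=\langle f,\,C_{\psi_r}u\rangle$ holds for all $f,u\in H^2(\D)$ by the classical Hilbert-space case of Cowen's adjoint formula for linear fractional symbols (see \cite{CMc}); since $1/w\in H^\infty(\D)$ (as $w$ has no zeros on $\overline{\D}$), the operator $M_{1/w}C_{\Fi_r}$ is bounded on $H^p(\D)$ and $C_{\psi_r}$ is bounded on $H^{p'}(\D)$, so both sides of the identity are continuous on $H^p(\D)\times H^{p'}(\D)$, and density of the polynomials extends it to all $1<p<\infty$. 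With that substitution your argument is complete.
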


\begin{proof}
		Let $1<{p'}<\infty$ be such that $\frac{1}{p}+\frac{1}{{p'}}= 1$, and consider $C_{\Fi_r}^*$ in $H^{p'}(\D)$.  Let $H_0^{p'}(\D) := zH^{p'}(\D),$ that is, the closed subspace of $H^{p'}(\D)$-functions vanishing at $0$. It is easy to check that each functional in $H_0^{p'}(\D)^*$ can be identified with a function of $H_0^p(\D)$, with the same dual pairing that relates $H^{p'}(\D)^*$ and $H^p(\D)$.
		
		\smallskip
		
		Since $1$ is the reproducing kernel at $0$ and $\Fi_r(0) =0$, it follows that the rank-one operator $1\otimes 1$ in $H^{p'}(\D)$ commutes with $C_{\Fi_r}^*.$ Observe that $1\otimes 1$ is a projection onto the one-dimensional space consisting of constants $\pe{1}$, and so $I-1\otimes 1$ is a projection onto $zH^{p'} (\D).$ Hence, $C_{\Fi_r}^*$ also commutes with $I-1\otimes 1$ and therefore $zH^{p'}(\D)$ is invariant under $C_{\Fi_r}^*$.

		\smallskip
		
		Now, denote by $M_z : H^{p'}(\D) \rightarrow H_0^{p'}(\D)$ and $M_{1/z}: H_0^{p'}(\D)\rightarrow H^{p'}(\D)$ the bounded operators of multiplication by $z$ and $1/z$, respectively. Observe that these operators are inverses of each other. Similar arguments as in \cite[p. 35-36]{BS} yield that
		\[
			\langle C_{\Fi_r}^*f, z^n\rangle = \langle M_z(rC_{\psi_r})M_{1/z}f,z^n \rangle
		\]
		for every $f \in H_0^{p'}(\D)$ and $n\in\N$. Since polynomials are dense in $H_0^p(\D)$, it follows that
		\[
			C_{\Fi_r}^*|_{H_0^{p'}(\D)} = M_z\big(rC_{\psi_r}|_{H^{p'}(\D)}\big)M_{1/z}.	
		\]
		Finally, since $H^{p'}(\D) = \pe{1}\oplus H_0^{p'}(\D),$ and $C_{\Fi_r}^*|_{\pe{1}} = I_1$, we deduce that $C_{\Fi_r}^*$ is similar to $I_1\oplus (rC_{\psi_r})$, and the result follows  by taking  adjoints.
\end{proof}
	
With this result at hands, we are in position to prove Theorem \ref{teorema hiperbolico no-automorfismo ii}.
	
\begin{proof}[Proof of Theorem \ref{teorema hiperbolico no-automorfismo ii}]
	Without loss of generality, we may assume that $\Fi = \Fi_r$ for some $0 < r < 1$ of the canonical form in \eqref{modelo hiperbolico no-automorfismo ii}. By Lemma \ref{lema semejantes}, using the decomposition $H^p(\D) = \langle 1 \rangle \oplus H_0^p(\D)$, the operator $C_{\Fi_r}$ is similar to $T:=I_1\oplus (rC_{\psi_r})^*$. Thus, by Corollary \ref{corolario hiperbolico no-automorfismo}, it follows that $\sigma_\text{\normalfont{p}}(C_{\Fi_r})= \{1\}.$ As a consequence, both $C_{\Fi_r}$ and $T$ have the SVEP.
	
	\smallskip
	
	Now, let $F\subsetneq \sigma(C_{\Fi_r})$ be a closed set. Consider $f =f_1\oplus f_2 \in H^p_T(F)$ and $\Gamma = \Gamma_1\oplus\Gamma_2 : \C\setminus F \rightarrow \pe{1}\oplus H_0^p(\D)$ satisfying
	\[
		(T-zI)\Gamma(z) = f \qquad\text{for every } z \in \C\setminus F.
	\]
	Observe that $(I_1-zI)\Gamma_1(z) = f_1$ and $((rC_{\psi_r})^*-zI)\Gamma_2(z) = f_2$ for every $z \in \C\setminus F$, so $H^p_T(F) = H^p_{I_1}(F)\oplus H^p_{rC_{\psi_r}^*}(F).$ Finally, by Theorem \ref{teorema hiperbolico no-automorfismo}, $H^p_{rC_{\psi_r}^*}(F)$ is closed, so $H^p_{C_{\Fi_r}}(F)$ is closed as well and $C_{\varphi_r}$ has the Dunford property $(C)$, as claimed.
	
	\smallskip
	
	It remains to show that $C_{\varphi_r}^*$  does not have the SVEP in $H^p(\D)^*$. Just recall that $C_{\varphi_r}^*$ is similar to $I_1\oplus (rC_{\psi_r})$ acting in $H^{p'}(\D)$, and it is straightforward to check that the fact that $C_{\psi_r}$ does not enjoy the SVEP (see Theorem \ref{teorema hiperbolico no-automorfismo}) implies that $C_{\Fi_r}^*$ does not either, which yields the statement.
\end{proof}
	
	Indeed, the identity $H^p_T(F) = H^p_{I_1}(F)\oplus H^p_{rC_{\psi_r}^*}(F)$ obtained previously gives us plenty of information about local spectral features of $C_{\Fi_r}$, which we recollect in our next result:
	
	\begin{corollary}
		Let $\Fi_r : \D \to \D$ be a hyperbolic non-automorphism of second kind of the form \eqref{modelo hiperbolico no-automorfismo ii} and $1<p<\infty.$ Consider $C_{\Fi_r}$ acting on $H^p(\D)$ and $F\subsetneq \sigma(C_{\Fi_r})$  a closed set. Then:
		\begin{enumerate}
			\item[(i)] $\sigma_\text{\normalfont{p}}(C_{\varphi_r})=\{1\}$ and its associated eigenspace is $\langle 1 \rangle$.
			\item[(ii)] $\displaystyle H^p_{C_{\Fi_r}}(F) = \begin{cases} \{0\}, & \text{if } F\subsetneq \overline{D(0,r^{1/p})}, \\ H_0^p(\D),  & \text{if } F= \overline{D(0,r^{1/p})}, \\ \langle 1\rangle, & \text{if } F=\{1\}.\end{cases}$
			\item[(iii)] If $f$ is a non-zero function in $H^p(\mathbb{D})$, then
			\[
				\sigma_{C_{\Fi_r}}(f) = \begin{cases} \{1\}, & \text{if } f \in \langle 1\rangle, \\ \overline{D(0,r^{1/p})},  & \text{if } f \in H_0^p(\mathbb{D}), \\ \sigma(C_{\varphi_r}), & \text{otherwise}.\end{cases}
			\]
			\item[(iv)] If $f$ is a non-zero function in $H^p(\mathbb{D})$, then
			\[
				r_{C_{\Fi_r}}(f) = \begin{cases} r^{1/p},  & \text{if } f \in H_0^p(\mathbb{D}), \\ 1, & \text{otherwise}.\end{cases}
			\]
			\item[(v)] Let $M\subset H^p(\D)$ be a non-trivial closed invariant subspace for $C_{\Fi_r}$. Then,
			\[
				\sigma(C_{\Fi_r}|_M)\in \big\{ \{1\}, \overline{D(0,r^{1/p})}, \sigma(C_{\Fi_r})\big\}.
			\]
		\end{enumerate}
		\end{corollary}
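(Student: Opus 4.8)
The plan is to exploit the similarity $C_{\Fi_r} \simeq I_1 \oplus (rC_{\psi_r})^*$ furnished by Lemma \ref{lema semejantes}, which is in fact genuine on the first factor: since $\Fi_r(0)=0$, both $\pe 1$ and $H_0^p(\D)$ are invariant under $C_{\Fi_r}$, so with respect to $H^p(\D)=\pe 1 \oplus H_0^p(\D)$ one has $C_{\Fi_r}=I_1\oplus B$ with $B:=C_{\Fi_r}|_{H_0^p(\D)}$ similar to $A:=(rC_{\psi_r})^*$. I will use that $C_{\Fi_r}$ has the SVEP (indeed property $(C)$, by Theorem \ref{teorema hiperbolico no-automorfismo ii}) and that, transporting Corollary \ref{corolario hiperbolico no-automorfismo} through the similarity (and scaling by $r>0$), the operator $B$ satisfies $\sigma(B)=\overline{D(0,r^{1/p})}$, has empty point spectrum, obeys $H^p_B(F)=\{0\}$ for closed $F$ with $F\cap\sigma(B)\subsetneq\sigma(B)$ while $H^p_B(\sigma(B))=H_0^p(\D)$, and has $\sigma_B(x)=\sigma(B)$ for every $x\neq 0$.

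For (i), $C_{\Fi_r}1=1$ puts $\pe 1$ in the eigenspace at $1$, while an eigenvector $x_1\oplus x_2$ of $I_1\oplus B$ forces $x_2\in\ker(B-\lambda)=\{0\}$ and then $\lambda=1$, $x_1\in\pe 1$. For (ii) I would invoke the direct-sum identity $H^p_{C_{\Fi_r}}(F)=H^p_{I_1}(F)\oplus H^p_B(F)$ obtained in the proof of Theorem \ref{teorema hiperbolico no-automorfismo ii}: here $H^p_{I_1}(F)=\pe 1$ when $1\in F$ and $\{0\}$ otherwise, and $H^p_B(F)=H_0^p(\D)$ exactly when $\overline{D(0,r^{1/p})}\subseteq F$ and $\{0\}$ otherwise, so the three displayed sets $F$ read off the three values. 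Parts (iii)--(iv) follow at once: for a direct sum of SVEP operators $\sigma_{C_{\Fi_r}}(f_1\oplus f_2)=\sigma_{I_1}(f_1)\cup\sigma_B(f_2)$, which is $\{1\}$, $\overline{D(0,r^{1/p})}$ or their union according to which of $f_1,f_2$ vanish, and (iv) then comes from $r_{C_{\Fi_r}}(f)=\max\{|\lambda|:\lambda\in\sigma_{C_{\Fi_r}}(f)\}$ together with $r^{1/p}<1$.

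The real work is (v). First I would record that $\eta(\sigma(C_{\Fi_r}))=\sigma(C_{\Fi_r})$: since $r^{1/p}<1$, the complement of $\overline{D(0,r^{1/p})}\cup\{1\}$ is connected, so the spectrum has no holes, whence $\sigma(C_{\Fi_r}|_M)\subseteq\sigma(C_{\Fi_r})$ for every closed invariant $M$ by \cite[Theorem 0.8]{RR}. The key step is that the rank-one Riesz projection $P\colon f\mapsto f(0)$ associated with the isolated spectral point $1$ --- whose range $\pe 1$ and kernel $H_0^p(\D)$ are exactly $H^p_{C_{\Fi_r}}(\{1\})$ and $H^p_{C_{\Fi_r}}(\overline{D(0,r^{1/p})})$ --- maps every closed invariant subspace $M$ into itself. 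Indeed $P=\frac{1}{2\pi i}\oint_{|z-1|=\varepsilon}(zI-C_{\Fi_r})^{-1}\,dz$ for small $\varepsilon$, and the contour avoids $\{1\}$ and $\overline{D(0,r^{1/p})}$, hence lies in $\rho(C_{\Fi_r}|_M)$, so each resolvent there carries $M$ into $M$. As $PM$ is a subspace of the line $\pe 1$, this forces either $\pe 1\subseteq M$ or $M\subseteq H_0^p(\D)$.

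In the first case $M=\pe 1\oplus N$ with $N:=M\cap H_0^p(\D)$ closed and $B$-invariant, so $\sigma(C_{\Fi_r}|_M)=\{1\}\cup\sigma(B|_N)$; in the second $M\subseteq H_0^p(\D)$ is $B$-invariant and $\sigma(C_{\Fi_r}|_M)=\sigma(B|_N)$ with $N=M$. It then remains to show $\sigma(B|_N)$ is empty (for $N=\{0\}$) or all of $\overline{D(0,r^{1/p})}$: for $N\neq\{0\}$ pick $0\neq x\in N$, and use $\overline{D(0,r^{1/p})}=\sigma_B(x)\subseteq\sigma_{B|_N}(x)\subseteq\sigma(B|_N)\subseteq\eta(\sigma(B))=\overline{D(0,r^{1/p})}$, the outer containment being \cite[Theorem 0.8]{RR} and the disk having no holes. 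Collecting the cases yields precisely $\sigma(C_{\Fi_r}|_M)\in\{\{1\},\,\overline{D(0,r^{1/p})},\,\sigma(C_{\Fi_r})\}$. I expect the main obstacle to be exactly the invariance $PM\subseteq M$ --- i.e.\ controlling $\rho(C_{\Fi_r}|_M)$ near the isolated point so the spectral projection respects the lattice --- together with pinning $\sigma(B|_N)$ down to the full disk via the all-vectors-have-full-local-spectrum property, rather than allowing it to be a proper subset.
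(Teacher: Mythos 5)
Your proposal is correct and takes essentially the same approach as the paper: the similarity $C_{\Fi_r}\simeq I_1\oplus(rC_{\psi_r})^*$ of Lemma \ref{lema semejantes}, the direct-sum identity for the spectral subspaces, and Corollary \ref{corolario hiperbolico no-automorfismo} transported through the similarity and the scaling by $r$. The paper disposes of (iii)--(v) as following ``easily from (ii) and the basic properties of local spectra and spectral subspaces''; your Riesz-projection dichotomy (either $\langle 1\rangle\subseteq M$ or $M\subseteq H_0^p(\D)$) together with the full-local-spectrum argument pinning $\sigma(B|_N)$ to the whole disc $\overline{D(0,r^{1/p})}$ is a sound filling-in of exactly those omitted details.
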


\medskip	

\begin{proof}
Note that (i) has been actually proved in the proof of Theorem \ref{teorema hiperbolico no-automorfismo ii}. To prove (ii), recall that $C_{\Fi_r}$ is similar to $T:= I_1\oplus (rC_{\psi_r})^*$ (where $(rC_{\psi_r})^*$ is acting on $H^p(\D)$) and $H^p_T(F) = H^p_{I_1}(F)\oplus H^p_{rC_{\psi_r}^*}(F)$ for every closed set $F\subsetneq \sigma(C_{\Fi_r}).$  By Corollary \ref{corolario hiperbolico no-automorfismo}, if $F\subsetneq \overline{D(0,r^{1/p})}$, then $H^p_{rC_{\psi_r}^*}(F) = \{0\}$. Since $H^p_{I_1}(F) = \{0\}$ as well, we deduce that $H^p_{C_{\Fi_r}}(F) = \{0\}$ in that case.

\smallskip

On the other hand, if $F=\overline{D(0,r^{1/p})}$, then $H^p_{rC_{\psi_r}^*}(F)=H^p(\D)$, and bringing it back via the isomorphism $M_{z} :H^p(\D)\rightarrow H_0^p(\D)$ we deduce that $H^p_{C_{\Fi_r}}(F) = H_0^p(\D),$ as stated. Finally, it is straightforward to check that $H^p_T({{1}}) = \langle 1 \rangle,$ which completes the proof of (ii). The properties (iii)-(v) follow easily from (ii) and the basic properties of local spectra and spectral subspaces.
	\end{proof}

\subsection*{A final remark: stability of the Dunford property}

Finally, in the class of linear fractional composition operators, we address the question whether the product of two commuting operators with the Dunford property $(C)$ inherits this property (see \cite{AG} for recent results). It is well-known that if two linear fractional composition operators  $C_{\varphi}$ and $C_{\psi}$ commute, $\varphi$ and $\psi$ has the same fixed points (see \cite{CDMV}, for instance). In addition, $C_{\varphi}C_{\psi}$ is the composition operator $C_{\psi \circ \varphi}$ and each of the classes in the classification of the linear transformations of $\D$ according to their fixed points remains invariant under composition. Hence, keeping in mind the Table \ref{tabla}, we are left with two cases:
if $\varphi$ is a parabolic automorphism and $\psi$ a parabolic non-automorphism (or viceversa) and if $\varphi$ is an elliptic automorphism an $\psi$ a loxodromic map with fix points 0 and $\infty$ (or viceversa). In the first case, $\psi \circ \varphi$ is a parabolic non-automorphism and in the second one is a loxodromic map. Accordingly, in both cases  $C_{\psi \circ \varphi}$ is decomposable, and therefore satisfies the Dunford property $(C)$.

\end{document}